\newtheorem{thm}{Theorem}[section]
\newtheorem{cor}[thm]{Corollary}
\newtheorem{lem}[thm]{Lemma}
\newtheorem{prop}[thm]{Proposition}
\theoremstyle{definition}
\newtheorem{defn}[thm]{Definition}
\theoremstyle{remark}
\newtheorem{rem}[thm]{Remark}
\newtheorem{ex}[thm]{Example}
\numberwithin{equation}{section}
\newcommand{\Hom}{\mathrm{Hom}}
\newcommand{\id}{\mathrm{id}}
\begin{document}
		\title[On $\ell$-open $C^*$ algebras and $\ell$-closed $C^*$-algebras]{On $\ell$-open and $\ell$-closed $C^*$-algebras}
		\author{Dolapo Oyetunbi}
		
		\address{Department of Mathematics and Statistics\\
		University of Ottawa\\
		Canada}
		\email{Doyet074@uottawa.ca}
		\author{Aaron Tikuisis}
	\address{Department of Mathematics and Statistics\\
		University of Ottawa\\
		Canada}
	\email{Aaron.tikuisis@uottawa.ca}
		
		\begin{abstract} In this paper, we characterize $\ell$-open and $\ell$-closed $C^*$-algebras and deduce that $\ell$-open $C^*$-algebras are $\ell$-closed, as conjectured by Blackadar. 
Moreover, we show that a commutative unital $C^*$-algebra is $\ell$-open if and only if it is semiprojective.
		\end{abstract}
		\maketitle
	\section{Introduction}
Lifting properties of $C^*$-algebras and their $^*$-ho\-mo\-mor\-phisms have been well-studied for some time with prominent connections to notions of stability; see \cite{Bla85,EilersShulmanSorensen,Lor93,Lor97} for example.
They play an important role in modern $C^*$-algebra theory including the Elliott classification program (\cite{Gong+Lin+Niu,Dad09,OP12}, for example).
In connection to a non-commutative generalization of Borsuk's homotopy extension theorem, Blackadar \cite{Bla16} defined natural classes of $C^*$-algebras in terms of lifting properties, called $\ell$-open and $\ell$-closed $C^*$-algebras.
A $C^*$-algebras is $\ell$-open if the liftable maps from the $C^*$-algebra to any quotient $C^*$-algebra is a point-norm open set, and $\ell$-closedness is defined similarly (precise definitions can be found in Section~\ref{sec:Prelim}).

While these notions are first formalized only recently by Blackadar, their study traces back at least to the celebrated work of Brown, Douglas, and Fillmore: in \cite{BDF2}, they seek conditions on a space $X$ that ensure the set of liftable maps from $C(X)$ to the Calkin algebra is closed.
It is open whether $C(\mathbb D)$ is $\ell$-closed, and a positive answer would settle an open question on page 119 of \cite{BDF1}.
More recently, Enders and Shulman further studied when the set of liftable maps from $C(X)$ to the Calkin algebra is closed, including a sufficient condition when $\dim(X)\leq 2$ and a full characterization when $\dim(X)\leq 1$ \cite{Enders+Shulman}.

In this paper, we prove the following characterizations of being $\ell$-open and $\ell$-closed:

\begin{thm}[{see Theorem~\ref{thm:Main}}]
Let $A$ be a $C^*$-algebra.
The following are equivalent:
\begin{enumerate}
\item $A$ is $\ell$-open.
\item For every $C^*$-algebra $B$ and ideal $I\subseteq B$, the natural map $\Hom(A,B) \to \Hom(A,B/I)$ is open.
\item $A$ satisfies the Homotopy Lifting Theorem (a noncommutative analog of the Borsuk Homotopy Extension Theorem), and $\Hom(A,B)$ is locally path-connected for every $C^*$-algebra $B$.
\end{enumerate}
\end{thm}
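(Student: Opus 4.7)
I would prove the implications in the order (ii)$\Rightarrow$(i), (i)$\Rightarrow$(ii), (ii)$\Rightarrow$(iii), (iii)$\Rightarrow$(i). The direction (ii)$\Rightarrow$(i) is essentially tautological: if $\pi_\ast = \pi\circ(-) : \Hom(A,B)\to\Hom(A,B/I)$ is an open map, then its image---exactly the set of liftable homomorphisms---is open, which is $\ell$-openness.

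The main technical step, and the step I expect to be the principal obstacle, is (i)$\Rightarrow$(ii). Here one must upgrade ``every nearby map to $B/I$ is liftable'' to ``every nearby map to $B/I$ has a nearby lift''. My plan is to argue by contradiction. Given $\psi\in\Hom(A,B)$, finite $F\subseteq A$, and $\epsilon>0$, assume no pair $(\delta,G)$ witnesses openness of $\pi_\ast$ at $\psi$, producing a sequence $\phi_n\to\pi\psi$ point-norm with no $(\epsilon,F)$-close lift. Assemble the $\phi_n$ into a single morphism $\tilde\phi:A\to\ell^\infty(B/I)$; since $\phi_n(a)\to\pi\psi(a)$, the image of $\tilde\phi$ modulo $c_0(B/I)$ coincides with the image of the constant-sequence lift $\tilde\psi:A\to\ell^\infty(B)$. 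The plan is to apply $\ell$-openness of $A$ to the surjection $\ell^\infty(B)\to\ell^\infty(B/I)$, or more likely to a pullback construction that couples lifts of the $\phi_n$ to $\psi$ via a fibre product of $\ell^\infty(B)$ and $B$ over $\ell^\infty(B/I)$, and to extract from the resulting lift a subsequence whose components contradict the failure assumption. Engineering the auxiliary quotient so that the produced lift is \emph{forced} to be asymptotically close to $\tilde\psi$, rather than merely existing, is the core difficulty.

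For (ii)$\Rightarrow$(iii), both conclusions follow by applying (ii) to interval-indexed $C^*$-algebras. For local path-connectedness of $\Hom(A,B)$, apply (ii) to the endpoint surjection $C([0,1],B)\to B\oplus B$, $g\mapsto (g(0),g(1))$, whose kernel is $C_0((0,1),B)$: the constant path $\mathrm{const}_\psi$ lifts $(\psi,\psi)$, and openness yields a neighborhood of $(\psi,\psi)$ in the image, so every $\psi'$ close to $\psi$ is joined to $\psi$ by a path of morphisms. For the Homotopy Lifting Theorem, apply (ii) to the surjection
\begin{equation*}
C([0,1],B)\longrightarrow N := \{(b,f)\in B\oplus C([0,1],B/I):\pi(b)=f(0)\},\qquad g\mapsto(g(0),\pi\circ g),
\end{equation*}
whose kernel is $C_0((0,1],I)$ and whose surjectivity uses a Bartle--Graves continuous selection for $\pi$. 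A lift of $(\psi_0,\Phi)\in\Hom(A,N)$ is precisely a homotopy in $\Hom(A,B)$ starting at $\psi_0$ and projecting to $\Phi$. Openness gives the lift when $\Phi$ is close to the constant path $\mathrm{const}_{\pi\psi_0}$; for arbitrary $\Phi$, I would use uniform continuity and compactness to subdivide $[0,1]$ into subintervals on which $\Phi$ is nearly constant, lift each piece inductively with the previous endpoint as new basepoint, and glue.

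Finally, (iii)$\Rightarrow$(i) combines the two ingredients directly: a path-connected neighborhood of a liftable $\phi\in\Hom(A,B/I)$ consists entirely of liftable maps, since any $\phi'$ in such a neighborhood is joined to $\phi$ by a path, which the Homotopy Lifting Theorem lifts (starting at a chosen lift of $\phi$) to a path of morphisms in $\Hom(A,B)$ whose endpoint lifts $\phi'$. This exhibits an open set of liftable maps around $\phi$, giving $\ell$-openness.
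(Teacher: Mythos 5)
Your overall architecture matches the paper's (a cycle of implications, with (ii)$\Rightarrow$(i) trivial, the interval algebras $C([0,1],B)\to B\oplus B$ and $C([0,1],B)\to\{(b,f):\pi_I(b)=f(0)\}$ for (ii)$\Rightarrow$(iii), and (iii)$\Rightarrow$(i) via ``nearby maps are homotopic, then lift the homotopy''). But the step you yourself flag as the core difficulty --- (i)$\Rightarrow$(ii), forcing the lift of a nearby map to be \emph{close} to the given lift rather than merely to exist --- is exactly the content of the paper's Theorem~\ref{thm:lopen}, and your proposal does not supply the mechanism; ``apply $\ell$-openness to $\ell^\infty(B)\to\ell^\infty(B/I)$ or to some fibre product'' is not yet an argument, since the single assembled map $\tilde\phi$ is not itself the limit of a sequence of liftable maps, so $\ell$-openness has nothing to bite on. The paper's resolution is a specific construction you would need: with $B=\prod_n B_n$, $I=\prod_n I_n$ and $J$ the $c_0$-ideal, use the Chinese remainder theorem (Lemma~\ref{lem:CRT}) to produce $\theta:A\to B/(I\cap J)$ agreeing with $\overline\gamma$ modulo $J$ and with $\varphi$ modulo $I$; then splice a $^*$-linear coordinatewise lift of $\theta$ with $\overline\gamma$ at a cut point $m$ to get $^*$-ho\-mo\-mor\-phisms $\alpha_m:A\to B/(I\cap J)$ converging point-norm to the (trivially liftable) $\pi_{I\cap J}\circ\overline\gamma$. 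Applying $\ell$-openness to the quotient $B\to B/(I\cap J)$ makes some $\alpha_m$ liftable, and any lift is automatically close to $\overline\gamma$ asymptotically (because it agrees with $\alpha_m$ modulo $J$) while its coordinates lift $\varphi_n$ for $n\geq m$ (because it agrees modulo $I$). That simultaneous forcing is precisely what the quotient by $I\cap J$ buys, and it is absent from your plan. (Note also the paper proves the \emph{uniform} statement, so it must allow the $B_n$, $I_n$ to vary and use nets of finite sets for non-separable $A$.)

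A secondary gap: your (ii)$\Rightarrow$(iii) subdivision argument for the Homotopy Lifting Theorem uses openness at ``the previous endpoint as new basepoint,'' but with plain (pointwise) openness the tolerance $(\mathcal G,\delta)$ at each new basepoint is only known after the previous step, since it depends on the lift just constructed; so a partition chosen in advance by uniform continuity is not justified, and a greedy scheme can stall before reaching $t=1$. The paper sidesteps this by first establishing uniform openness (Theorem~\ref{thm:lopen}): one pair $(\mathcal G,\delta)$ works for every basepoint and every subinterval, so the partition can be fixed up front. To repair your version you should either prove the uniform form in (i)$\Rightarrow$(ii) (which is what the paper does, and what your contradiction setup would naturally yield once the splicing construction is in place) or give a separate argument that pointwise openness for all $(B,I)$ implies the uniform statement.
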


Condition (ii) can be strengthened to uniform openness (see Theorem~\ref{thm:Main}).

\begin{thm}[{see Theorem~\ref{thm:MainClosed}}]
Let $A$ be a separable $C^*$-algebra.
Then $A$ is $\ell$-closed if and only if for every $C^*$-algebra $B$ and ideal $I\subseteq B$, the natural map $\Hom(A,B) \to \Hom(A,B/I)$ is uniformly relatively open.
\end{thm}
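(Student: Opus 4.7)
The plan is to prove the two directions separately. The backward implication (uniform relative openness implies $\ell$-closed) is a Cauchy-sequence-of-lifts argument, while the forward implication is the substantive content and proceeds by contradiction, leveraging separability of $A$.

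\textbf{Backward direction.} Assuming the natural map $\Hom(A,B) \to \Hom(A,B/I)$ is uniformly relatively open for every pair $(B,I)$, consider $\phi_n \to \phi$ in $\Hom(A, B/I)$ with each $\phi_n$ liftable. Fix (by separability) an exhaustion of $A$ by finite subsets $F_1 \subseteq F_2 \subseteq \cdots$ with dense union, and tolerances $\varepsilon_k := 2^{-k}$; uniform openness supplies $(G_k, \delta_k)$. Passing to a subsequence $(\phi_{n_k})$ for which $\|\phi_{n_{k+1}}(g) - \phi_{n_k}(g)\| < \delta_k$ on $G_k$, one inductively selects lifts $\tilde\phi_{n_k}$ of $\phi_{n_k}$ with $\|\tilde\phi_{n_{k+1}}(f) - \tilde\phi_{n_k}(f)\| < \varepsilon_k$ on $F_k$. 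The resulting sequence is point-norm Cauchy on $\bigcup_k F_k$ and by contractivity extends to a $^*$-homomorphism $\tilde\phi: A \to B$ lifting $\phi$.

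\textbf{Forward direction.} Argue by contradiction: suppose $A$ is $\ell$-closed but uniform openness fails at some $(F, \varepsilon)$. Using separability, pick $G_1 \subseteq G_2 \subseteq \cdots$ finite with $\bigcup_n G_n$ dense, and $\delta_n := 1/n$; the failure produces witnesses $(B_n, I_n, \tilde\phi_n, \psi_n)$ with $\psi_n$ liftable, $\|\psi_n(g) - q_n\tilde\phi_n(g)\| < 1/n$ for $g \in G_n$, yet no $^*$-homomorphic lift of $\psi_n$ is within $\varepsilon$ of $\tilde\phi_n$ on $F$. Assembling, set $C := \prod_n B_n$, $J := \prod_n I_n$, $D := C/J \cong \prod_n B_n/I_n$, and form $\tilde\Phi := (\tilde\phi_n)_n$, $\Phi := q\tilde\Phi$, and $\Psi := (\psi_n)_n \in \Hom(A, D)$. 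A density argument on $\bigcup_n G_n$ shows $\Phi - \Psi$ takes values in $D_0 := \bigoplus_n B_n/I_n$, while a pigeonhole on the finite set $F$ forces every lift of $\Psi$ in $C$ to be at least $\varepsilon$ from $\tilde\Phi$ on some $f^\ast \in F$.

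\textbf{Main obstacle.} The crucial step is to parlay this product-algebra obstruction into a genuine failure of $\ell$-closedness. The plan is to work in the corona-type quotient $\mathcal{C} := C / \bigoplus_n B_n$ together with the image $\overline{J}$ of $J$, where the images of $\Phi$ and $\Psi$ collapse to a common $^*$-homomorphism $\Theta : A \to \mathcal{C}/\overline{J}$. From modifications of $\tilde\Phi$ matching $\Psi$ in carefully chosen collections of coordinates --- each liftable to $\mathcal{C}$ by combining lifts of finitely many $\psi_n$'s with the $\tilde\phi_n$'s in the remaining coordinates --- I intend to produce a point-norm convergent sequence of liftable $^*$-homomorphisms into $\mathcal{C}/\overline{J}$ whose limit has no lift to $\mathcal{C}$: any putative lift would, by inspection coordinate by coordinate, yield lifts of $\psi_n$ within $\varepsilon$ of $\tilde\phi_n$ on $F$ for infinitely many $n$, contradicting the choice of witnesses. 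The hardest part is arranging the sequence so that non-trivial convergence to a genuinely non-liftable limit is forced, which hinges on a careful interplay between the two ideals $J$ and $\bigoplus_n B_n$ inside $C$.
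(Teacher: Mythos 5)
Your backward direction is exactly the paper's argument (inductively correcting lifts via the uniform openness data and taking a point-norm Cauchy limit), and your forward direction also starts like the paper: negate uniform openness along an exhaustion $\mathcal G_1\subseteq\mathcal G_2\subseteq\cdots$, collect witnesses $(B_n,I_n,\tilde\phi_n,\psi_n)$, and assemble $C=\prod_n B_n$, $J=\prod_n I_n$. But the crucial step --- turning the product-algebra data into an application of $\ell$-closedness --- is only a plan in your write-up, and the plan as stated has a genuine gap. You propose to pass to the corona-type quotient $\mathcal C:=C/\bigoplus_n B_n$ and find liftable maps into $\mathcal C/\overline J$ converging to a map whose lift to $\mathcal C$ would be inspected ``coordinate by coordinate.'' Two things go wrong. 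First, once you quotient by $\bigoplus_n B_n$, the images of $\tilde\Phi$ and of all your spliced modifications coincide with the single map $\Theta$ in $\mathcal C/\overline J$, and $\Theta$ is already liftable to $\mathcal C$ (via the image of $\tilde\Phi$); so no nontrivially convergent sequence with a problematic limit can be manufactured there, which is exactly the difficulty you flag but do not resolve. Second, even granting a lift to $\mathcal C$, a $^*$-homomorphism into $C/\bigoplus_n B_n$ has no canonical coordinate components: it need not lift to $C$, so you cannot read off genuine $^*$-homomorphisms $\eta_n:A\to B_n$, let alone ones satisfying the \emph{exact} condition $\pi_{I_n}\circ\eta_n=\pi_{I_n}\circ\psi_n$ demanded by the witnesses --- exactness is destroyed by first modding out $\bigoplus_n B_n$.

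The paper's resolution keeps the lift landing in the product. With $J:=\bigoplus_n B_n$ one works with the single ideal $I\cap J$ of $B=\prod_n B_n$: the approximate agreement on $\bigcup_n\mathcal G_n$ gives $\pi_{I+J}\circ\overline\phi=\pi_{I+J}\circ\overline\psi$, so the Chinese remainder theorem (Lemma~\ref{lem:CRT}) glues these into one $^*$-homomorphism $\theta:A\to B/(I\cap J)$ with $\pi_J\circ\theta=\pi_J\circ\overline\phi$ and $\pi_I\circ\theta=\pi_I\circ\overline\psi$. The spliced maps $\overline\alpha_n=(\psi_1,\dots,\psi_{n-1},\phi_n,\phi_{n+1},\dots)$ are honest homomorphisms into $B$, so $\pi_{I\cap J}\circ\overline\alpha_n$ are liftable, and they converge point-norm to $\theta$; $\ell$-closedness then lifts $\theta$ to $\eta=(\eta_n):A\to\prod_n B_n$, and the two identities above force $\pi_{I_n}\circ\eta_n=\pi_{I_n}\circ\psi_n$ exactly and $\|\eta_n(x)-\phi_n(x)\|\to 0$, contradicting the witnesses. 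If you want to salvage your approach, this is the structure you need: quotient by the intersection $I\cap J$ rather than by $\bigoplus_n B_n$ first, so that lifts retain coordinatewise meaning.
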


As a consequence, we confirm a conjecture of Blackadar from \cite{Bla16}, that $\ell$-open $C^*$-algebras are $\ell$-closed.

Additionally, we prove that a unital commutative $C^*$-algebra is semiprojective if and only if it is $\ell$-open, confirming another conjecture from \cite[Page 299]{Bla16}.

	\section{Preliminaries}
\label{sec:Prelim}
Let $A$ and $B$ be $C^*$-algebras and let $I$ an ideal in $B$ (by which we mean a closed, two-sided ideal).
We write $\pi_I : B \rightarrow B/I$ be the quotient map. 
Recall that a $^*$-ho\-mo\-mor\-phism $\phi: A \rightarrow B/I$ is \textit{liftable} if there exists a $^*$-ho\-mo\-mor\-phism $\overline{\phi} : A \rightarrow B$ such that $ \phi = \pi_I \circ \overline{\phi}$:

\tikzset{every picture/.style={line width=0.75pt}} 

\begin{tikzpicture}[x=0.75pt,y=0.75pt,yscale=-1,xscale=1]
uncomment if require: \path (220,100); 

\draw    (339.11,158.3) -- (470.11,159.28) ;
\draw [shift={(472.11,159.3)}, rotate = 180.43] [color={rgb, 255:red, 0; green, 0; blue, 0 }  ][line width=0.75]    (10.93,-3.29) .. controls (6.95,-1.4) and (3.31,-0.3) .. (0,0) .. controls (3.31,0.3) and (6.95,1.4) .. (10.93,3.29)   ;
\draw    (486.11,88.3) -- (486.11,146.3) ;
\draw [shift={(486.11,148.3)}, rotate = 270] [color={rgb, 255:red, 0; green, 0; blue, 0 }  ][line width=0.75]    (10.93,-3.29) .. controls (6.95,-1.4) and (3.31,-0.3) .. (0,0) .. controls (3.31,0.3) and (6.95,1.4) .. (10.93,3.29)   ;
\draw  [dash pattern={on 4.5pt off 4.5pt}]  (341.11,152.3) -- (430.06,105.2) -- (475.34,81.23) ;
\draw [shift={(477.11,80.3)}, rotate = 152.1] [color={rgb, 255:red, 0; green, 0; blue, 0 }  ][line width=0.75]    (10.93,-3.29) .. controls (6.95,-1.4) and (3.31,-0.3) .. (0,0) .. controls (3.31,0.3) and (6.95,1.4) .. (10.93,3.29)   ;

\draw (325,150) node [anchor=north west][inner sep=0.75pt]   [align=left] {A};
\draw (473,150.4) node [anchor=north west][inner sep=0.75pt]    {$B/I$};
\draw (482,70) node [anchor=north west][inner sep=0.75pt]   [align=left] {B};
\draw (392,161.4) node [anchor=north west][inner sep=0.75pt]    {$\phi $};
\draw (491,107.4) node [anchor=north west][inner sep=0.75pt]    {$\pi _{I}$};
\draw (373,83.4) node [anchor=north west][inner sep=0.75pt]    {$\exists \ \ \overline{\phi }$};

\end{tikzpicture}

We denote the space of $^*$-ho\-mo\-mor\-phisms from $A$ to $B$ endowed with the point-norm topology by  $\Hom (A, B)$ and the subspace of unital $^*$-ho\-mo\-mor\-phisms by $\Hom_1 (A, B)$ (if $A$ and $B$ are unital). 
For $\phi \in \Hom(A,B)$, a neighbourhood base of $\phi$ is made up of sets
\begin{equation}
\label{eq:UBdef}
 U_B(\phi;\mathcal F,\epsilon) \coloneqq \{\psi \in\Hom(A,B): \|\psi(a)-\phi(a)\|<\epsilon\ \forall a \in \mathcal F\},
\end{equation}
ranging over all finite sets $\mathcal F \subset A$ and all positive real numbers $\epsilon>0$.
This gives a uniform structure to $\Hom(A,B)$.
In fact, the sets of this neighbourhood base are parametrized independently of $B$, giving a uniform structure to all of $\Hom(A,B)$ at once.
(One would like to put a uniform structure on the disjoint union of $\Hom(A,B)$ ranging over all $C^*$-algebras $B$, except that this is not a well-founded set.
One can put a uniform structure on $\coprod_{B\in\mathcal B}  \Hom(A,B)$, for any set $\mathcal B$ of $C^*$-algebras.)

The set of liftable $^*$-ho\-mo\-mor\-phisms $A \to B/I$ is
\begin{equation} \Hom(A,B,I) \coloneqq \pi_I \circ \Hom(A,B). \end{equation}



The following is due to Blackadar \cite[Definition 6.1]{Bla16}.

\begin{defn}
Let $A$ be a $C^*$-algebra
\begin{enumerate}
	\item $A$ is $\ell$-\textit{open} 
if for any $C^*$-algebra $B$ and ideal $I$ of $B$, 
the set $\Hom(A, B, I)$ is open in $\Hom(A, B/I)$.
	\item $A$ is $\ell$-\textit{closed} 
if for any $C^*$-algebra $B$ and ideal $I$ of $B$, 
the set $\Hom(A, B, I)$ is closed in $\Hom(A, B/I)$.
\end{enumerate}
\end{defn}
\begin{defn}

Recall that a $^*$-ho\-mo\-mor\-phism $\phi: A\rightarrow C$ is \textit{(weakly) semiprojective} if for any $C^*$-algebra $B$, any increasing sequence $I_1 \triangleleft I_2 \triangleleft \cdots \triangleleft B$ of ideals in $B$, and any $^*$-ho\-mo\-mor\-phism $\psi : C \rightarrow B/\overline{\bigcup_n I_n}$ (and finite set $F\subset A$, $\epsilon >0$), there is an $n$ and a $^*$-ho\-mo\-mor\-phism $\overline{\psi} : A \rightarrow B/I_n$ such that $\psi \circ \phi = \pi_I \circ \overline{\psi}$ (resp.\ $\Vert \psi  \circ \phi (x) - \pi_I \circ \overline{\psi}(x) \Vert < \epsilon$ for all $x\in F$), where $\pi_I : B/I_n \rightarrow B/I$ is the quotient map.

\tikzset{every picture/.style={line width=0.75pt}} 

\begin{tikzpicture}[x=0.45pt,y=0.45pt,yscale=-1,xscale=1]
	uncomment if require: \path (-50,150); 
	
	\draw    (232,169) -- (366,169) ;
	\draw [shift={(369,169)}, rotate = 180] [fill={rgb, 255:red, 0; green, 0; blue, 0 }  ][line width=0.08]  [draw opacity=0] (8.93,-4.29) -- (0,0) -- (8.93,4.29) -- cycle    ;
	\draw    (393,98) -- (391.11,151) ;
	\draw [shift={(391,154)}, rotate = 272.05] [fill={rgb, 255:red, 0; green, 0; blue, 0 }  ][line width=0.08]  [draw opacity=0] (8.93,-4.29) -- (0,0) -- (8.93,4.29) -- cycle    ;
	\draw  [dash pattern={on 4.5pt off 4.5pt}]  (112.11,162.02) -- (269.11,119.02) -- (378.2,91.75) ;
	\draw [shift={(381.11,91.02)}, rotate = 165.96] [fill={rgb, 255:red, 0; green, 0; blue, 0 }  ][line width=0.08]  [draw opacity=0] (8.93,-4.29) -- (0,0) -- (8.93,4.29) -- cycle    ;
	\draw    (110.11,170.02) -- (203.11,169.04) ;
	\draw [shift={(205.11,169.02)}, rotate = 179.4] [color={rgb, 255:red, 0; green, 0; blue, 0 }  ][line width=0.75]    (10.93,-3.29) .. controls (6.95,-1.4) and (3.31,-0.3) .. (0,0) .. controls (3.31,0.3) and (6.95,1.4) .. (10.93,3.29)   ;
	
	\draw (213,158.4) node [anchor=north west][inner sep=0.75pt]    {$C$};
	\draw (374,157.4) node [anchor=north west][inner sep=0.75pt]    {$B/\overline{\bigcup{}_{n} \ I_{n}} \ $};
	\draw (385,74.4) node [anchor=north west][inner sep=0.75pt]    {$B/I_{n}$};
	\draw (145,175.4) node [anchor=north west][inner sep=0.75pt]    {$\phi $};
	\draw (404,114.4) node [anchor=north west][inner sep=0.75pt]    {$\pi _{_{I}}$};
	\draw (229,94.4) node [anchor=north west][inner sep=0.75pt]    {$\overline{\psi }$};
	\draw (90,161.4) node [anchor=north west][inner sep=0.75pt]    {$A$};
	\draw (294,177.4) node [anchor=north west][inner sep=0.75pt]    {$\psi $};

\end{tikzpicture}
	
$A$ is \textit{(weakly) semiprojective} if the identity $^*$-ho\-mo\-mor\-phism is (weakly) semiprojective. Some examples of semiprojective $C^*$-algebras are finite dimensional $C^*$-algebras, the universal $C^*$-algebras generated by $n$ unitaries, $C^*(\mathbb{F}_n)$,  and $\{f\in C(S^1 , \mathbf{M}_n) : f(1) \,\text{is scalar}\}$ (see \cite{Lor97}).
\end{defn}

\begin{ex}\cite[Corollary 6.2]{Bla16} All semiprojective $C^*$-algebras are both $\ell$-open and $\ell$-closed $C^*$-algebras.
\end{ex}

By slight abuse of notation, if $L\subseteq K \subseteq B$ are ideals, then we also use $\pi_K$ to denote the quotient map from $B/L$ to $B/K$.

We recall the following general Chinese remainder theorem for $C^*$-algebras:
\begin{lem}[\cite{Bla16}, Proposition 2.1]
\label{lem:CRT}
	Let $B$ be a $C^*$-algebra, and $I$ and $J$ ideals in $B$. Then $B/(I \cap J)$ is isomorphic to the fibred product
	$ \{(x,y) \in x\in B/I \oplus y \in B/J: \pi_{I+J}(x)= \pi_{I+J}(y) \}$
via the map $a \rightarrow (\pi_I(a),\pi_J(a))$.
\end{lem}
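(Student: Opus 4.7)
The plan is to construct the candidate isomorphism explicitly and verify it is bijective onto the fibred product. Define $\Phi : B \to B/I \oplus B/J$ by $\Phi(a) = (\pi_I(a), \pi_J(a))$. This is a $^*$-homomorphism whose kernel is precisely $I \cap J$, since $\Phi(a) = 0$ if and only if $a \in I$ and $a \in J$. Consequently $\Phi$ factors through an injective $^*$-homomorphism $\overline\Phi : B/(I\cap J) \to B/I \oplus B/J$. Its image lies in the fibred product, because for any $a \in B$ both $\pi_{I+J}(\pi_I(a))$ and $\pi_{I+J}(\pi_J(a))$ equal the class of $a$ in $B/(I+J)$.

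The substantive step is surjectivity onto the fibred product. Given $(x,y) \in B/I \oplus B/J$ with $\pi_{I+J}(x) = \pi_{I+J}(y)$, choose lifts $b,c \in B$ with $\pi_I(b) = x$ and $\pi_J(c) = y$. The compatibility condition means $b - c$ maps to zero in $B/(I+J)$, so $b - c = i + j$ for some $i \in I$ and $j \in J$. Setting $a \coloneqq b - i = c + j$, we obtain $\pi_I(a) = \pi_I(b) = x$ and $\pi_J(a) = \pi_J(c) = y$, hence $\overline\Phi(a + (I\cap J)) = (x,y)$.

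It remains to observe that $\overline\Phi$ is an isomorphism of $C^*$-algebras. An injective $^*$-homomorphism between $C^*$-algebras is automatically isometric, and its image is automatically closed (being the isometric image of a complete space), so $\overline\Phi$ is a topological $^*$-isomorphism onto the fibred product, which is itself a $C^*$-subalgebra of $B/I \oplus B/J$. The one nontrivial ingredient hidden in the formulation is that $I+J$ is a closed ideal of $B$, so that the quotient $B/(I+J)$ and the map $\pi_{I+J}$ actually make sense; this is the standard fact that the sum of two closed two-sided ideals in a $C^*$-algebra is closed, which I would cite rather than reprove. With that in hand, the construction above is the main obstacle, but it is a routine application of elementary ideal arithmetic.
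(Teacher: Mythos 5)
Your proof is correct, and it is the standard argument: the paper itself does not prove this lemma but simply cites Blackadar's Proposition 2.1 in \cite{Bla16}, whose proof proceeds along exactly these lines (factor the map $a\mapsto(\pi_I(a),\pi_J(a))$ through $B/(I\cap J)$, check surjectivity onto the fibred product by writing $b-c=i+j$ with $i\in I$, $j\in J$, and use that injective $^*$-homomorphisms are isometric). You also correctly flag the one hidden ingredient, namely that $I+J$ is closed, so nothing is missing.
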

	
\section{Properties and characterization of $\ell$-open $C^*$-algebras}

The following shows that if $A$ is $\ell$-open then the quotient map $\Hom(A,B) \to \Hom(A,B/I)$ is always open.
In fact, it shows that this openness is uniform, as the relationship between $(\mathcal G,\delta)$ and $(\mathcal F,\epsilon)$ in the statement below does not depend on the $C^*$-algebra $B$, the ideal $I$, nor any of the $^*$-ho\-mo\-mor\-phisms under consideration.
The conclusion of the following theorem is (in the separable case) a reformulation of the conclusion of \cite[Theorem 4.1]{Bla16}; the ideas in the proof are similar, but work is needed to allow $\ell$-openness instead of semiprojectivity as the hypothesis.

	\begin{thm}
\label{thm:lopen}
		Let $A$ be an $\ell$-open $C^*$-algebra.
		Then for any $\epsilon > 0$ and any finite set $\mathcal{F} \subset A$, there is a $\delta >0$ and a finite set $\mathcal{G} \subset A$ such that whenever $B$ is a $C^*$-algebra, $I$ is an ideal of $B$, $\gamma$ and $\varphi$ are $^*$-ho\-mo\-mor\-phisms from $A$ to $B/I$ with $\Vert \gamma (u) - \varphi(u) \Vert < \delta$ for all $u\in \mathcal{G}$ and such that $\gamma$ lifts to a $^*$-ho\-mo\-mor\-phism $\overline{\gamma}: A \rightarrow B$, then $\varphi$ also lifts to a $^*$-ho\-mo\-mor\-phism $\overline{\varphi}: A \rightarrow B$ with $\Vert \overline{\gamma}(v) - \overline{\varphi}(v) \Vert < \epsilon$ for all $v \in \mathcal{F}$.
In other words, in the notation of \eqref{eq:UBdef},
\begin{equation} U_{B/I}(\gamma; \mathcal G, \delta) \subseteq \pi_I \circ U_B(\overline\gamma; \mathcal F, \epsilon).  \end{equation}
	\end{thm}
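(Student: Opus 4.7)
The plan is to argue by contradiction, mimicking the structure of \cite[Theorem 4.1]{Bla16} but substituting $\ell$-openness for semiprojectivity. Suppose the conclusion fails for some $\epsilon > 0$ and finite $\mathcal{F} \subset A$. After restricting to a separable sub-$C^*$-algebra of $A$ containing $\mathcal{F}$ if necessary, I would choose an increasing sequence of finite sets $\mathcal{G}_1 \subseteq \mathcal{G}_2 \subseteq \cdots \subseteq A$ with dense union, and for each $n$ pick a counterexample: a $C^*$-algebra $B_n$, an ideal $I_n \trianglelefteq B_n$, $^*$-homomorphisms $\gamma_n, \varphi_n : A \to B_n/I_n$ with $\|\gamma_n(u) - \varphi_n(u)\| < 1/n$ for $u \in \mathcal{G}_n$, a lift $\overline{\gamma}_n : A \to B_n$ of $\gamma_n$, and no lift $\overline{\varphi}_n$ of $\varphi_n$ with $\|\overline{\gamma}_n(v) - \overline{\varphi}_n(v)\| < \epsilon$ for every $v \in \mathcal{F}$.

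Next, I would assemble the counterexamples: set $B := \prod_n B_n$ and define the closed two-sided ideal $J := \{(b_n) \in B : \|b_n + I_n\|_{B_n/I_n} \to 0\}$. Let $\overline{\Gamma} : A \to B$ be given by $\overline{\Gamma}(a) := (\overline{\gamma}_n(a))_n$, set $\tilde{\Gamma} := \pi_J \circ \overline{\Gamma}$, and use norm-bounded lifts of $\varphi_n(a)$ to build a parallel $^*$-homomorphism $\tilde{\Phi} : A \to B/J$. Density of $\bigcup_n \mathcal{G}_n$, combined with $\|\gamma_n(u) - \varphi_n(u)\| < 1/n$ for $u \in \mathcal{G}_n$, forces $\tilde{\Gamma} = \tilde{\Phi}$ on all of $A$.

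Then I would apply $\ell$-openness of $A$ to $(B, J)$ at the liftable homomorphism $\tilde{\Gamma}$: a point-norm neighborhood of $\tilde{\Gamma}$ consists of liftable $^*$-homomorphisms, and $\tilde{\Phi} = \tilde{\Gamma}$ lies in it, so $\tilde{\Phi}$ admits a lift $\overline{\Phi} : A \to B$. The coordinate maps $\overline{\Phi}^n : A \to B_n$ will satisfy $\|\pi_{I_n}(\overline{\Phi}^n(a)) - \varphi_n(a)\| \to 0$ for each $a \in A$. If one can moreover arrange $\|\overline{\Phi}(v) - \overline{\Gamma}(v)\|_B < \epsilon$ for $v \in \mathcal{F}$ --- so that each $\overline{\Phi}^n$ is within $\epsilon$ of $\overline{\gamma}_n$ on $\mathcal{F}$ --- then a final non-uniform application of $\ell$-openness to $(B_n, I_n)$ at the liftable $\pi_{I_n} \overline{\Phi}^n$, whose neighborhood of liftable homomorphisms eventually contains $\varphi_n$, should yield for some large $n$ an exact lift of $\varphi_n$ within $\epsilon$ of $\overline{\gamma}_n$ on $\mathcal{F}$, contradicting the choice of counterexample.

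The hard part will be controlling the location of the lift $\overline{\Phi}$: bare $\ell$-openness delivers existence of a lift without any norm bound relative to $\overline{\Gamma}$. The plan is to apply $\ell$-openness to an auxiliary $C^*$-algebra encoding the closeness requirement --- for instance the fibred product $\hat{B} := B \times_{B/J} B$ with its diagonal ideal, or the mapping-cylinder algebra $C([0,1], B)$ with the ideal $\{f : f(1) \in J\}$, possibly iterated --- so that openness of the liftable set in $\Hom(A, B/J)$ translates into control on the location of lifts in $\Hom(A, B)$. In the semiprojective setting of \cite[Theorem 4.1]{Bla16} this obstacle is absent, since semiprojectivity lifts $\tilde{\Phi}$ directly back to an individual $B_n$; with only $\ell$-openness available, the adaptation demands this more delicate treatment.
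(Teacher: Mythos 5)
There is a genuine gap, and it sits exactly where your proposal leans on $\ell$-openness. First, your application of $\ell$-openness to $(B,J)$ with $J:=\{(b_n): \operatorname{dist}(b_n,I_n)\to 0\}$ is vacuous: as you observe, $\tilde\Phi=\tilde\Gamma$, so $\tilde\Phi$ is liftable with lift $\overline\Gamma$ itself, and no openness is used; the only information you can extract is that the coordinates of a lift satisfy $\|\pi_{I_n}\overline\Phi^n(a)-\varphi_n(a)\|\to 0$, i.e.\ the coordinates lift $\varphi_n$ only \emph{approximately}, which is no more than what the $\overline\gamma_n$ already gave. Second, your final step --- applying $\ell$-openness separately to each $(B_n,I_n)$ at the liftable map $\pi_{I_n}\overline\Phi^n$ and arguing that its neighbourhood of liftable maps ``eventually contains $\varphi_n$'' --- is circular: $\ell$-openness only provides, for each fixed $n$, some neighbourhood $U_{B_n/I_n}(\pi_{I_n}\overline\Phi^n;\mathcal G_n',\delta_n')$ of liftable maps whose parameters depend on $n$ in an uncontrolled way, and without the very uniformity the theorem asserts there is no reason any $\varphi_n$ lands in its own shrinking neighbourhood. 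Even if it did, bare liftability of $\varphi_n$ gives no control placing the lift within $\epsilon$ of $\overline\gamma_n$ on $\mathcal F$, which is what the contradiction requires; your proposed fibred-product or mapping-cylinder fixes are only sketched and are aimed at $\overline\Phi$, not at this last step. (A smaller issue: passing to a separable subalgebra of $A$ is not justified, since $\ell$-openness need not pass to subalgebras and the maps in question are defined on all of $A$; the paper simply uses a net of finite subsets with dense union.)

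The missing idea is to work modulo the \emph{intersection} $I\cap J$ (with $J$ the ideal of sequences whose norms tend to $0$), not modulo a large ideal containing $I$. Since $\pi_{I+J}\circ\overline\gamma=\pi_{I+J}\circ\varphi$, the Chinese remainder theorem (Lemma~\ref{lem:CRT}) produces a single $^*$-homomorphism $\theta:A\to B/(I\cap J)$ with $\pi_J\circ\theta=\pi_J\circ\overline\gamma$ and $\pi_I\circ\theta=\varphi$; then one splices a $^*$-linear lift $(\theta_n)$ of $\theta$ with the $\overline\gamma_n$ to get maps $\alpha_m$ that converge point-norm to the liftable map $\pi_{I\cap J}\circ\overline\gamma$, so a \emph{single} application of $\ell$-openness, at the ideal $I\cap J$, makes some $\alpha_m$ liftable. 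Any lift $\beta=(\beta_n)$ of $\alpha_m$ then does both jobs at once: modulo $I$ it agrees with $\theta$, so $\beta_n$ is an \emph{exact} lift of $\varphi_n$ for $n\geq m$, and modulo $J$ it agrees with $\overline\gamma$, so $\|\beta_n(v)-\overline\gamma_n(v)\|\to 0$ on $\mathcal F$. This is how the paper obtains both the exact lift and the norm control simultaneously, which your two separate (and respectively vacuous and non-uniform) applications of $\ell$-openness cannot deliver.
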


	\begin{proof}
Let $(\mathcal{G}_n)_{n\in \Lambda}$ be an increasing net of finite subsets of $A$ whose union is dense in $A$, and let $(\delta_n)_{n\in \Lambda}$ be a net (over the same index set) of positive numbers such that $\delta_n \to 0$.
	 Suppose that the conclusion of the theorem is false for a fixed $\epsilon >0$ and finite set $\mathcal{F}$. Then, there are $C^*$-algebras $B_n$ with ideals $I_n$ and $^*$-ho\-mo\-mor\-phisms $\gamma_n, \varphi_n : A \rightarrow B_n /I_n$ such that 
\begin{equation}
\label{eq:gammavarphiclose}
\Vert \gamma_n (u) - \varphi_n(u) \Vert < \delta_n
\end{equation}
 for all $u\in \mathcal{G}_n$, $\gamma_n$ lifts to $\overline{\gamma}_n: A \rightarrow B_n$, but no $\varphi_n$ lifts to $^*$-ho\-mo\-mor\-phism $\overline{\varphi}_n: A \rightarrow B_n$ with $\Vert \overline{\gamma}_n (v) - \overline{\varphi}_n(v) \Vert < \epsilon$ for all $v\in \mathcal{F}$.
		
		Let $B \coloneqq \prod \limits_{n\in\Lambda} B_n$, $I \coloneqq \prod \limits_{n\in\Lambda} I_n$, and $J \coloneqq \{(b_n)\in B: \lim_n \|b_n\| = 0\}$.
 Then $ B/I \cong \prod \limits_{n\in\Lambda} B_n / I_n$.
	Define $^*$-ho\-mo\-mor\-phisms $\overline{\gamma} \coloneqq (\overline\gamma_n)_{n\in\Lambda}: A \rightarrow B$ and $\varphi\coloneqq (\varphi_n)_{n\in\Lambda}: A \rightarrow B/I$.
Then \eqref{eq:gammavarphiclose} implies that $\lim\limits_{n} \Vert \gamma_n (x) - \varphi_n(x) \Vert = 0$ for all $x\in A$, and so $\pi_{I+J} \circ  \overline{\gamma} = \pi_{I+J} \circ \varphi$. 
		
	Using the general Chinese remainder theorem (Lemma \ref{lem:CRT}), there exists a $^*$-ho\-mo\-mor\-phism $\theta : A \rightarrow B/ (I\cap J)$ such that 
		\begin{equation}
\label{eq:thetadef}
			\pi_{J} \circ \overline{\gamma} = \pi_{J} \circ \theta\,\, \text{ and}\,\, \varphi = \pi_{I} \circ \theta 
		\end{equation}
Take a $^*$-linear lift $(\theta_n)_{n\in\Lambda}:A \to B$ of $\theta$ (which need not be a $^*$-ho\-mo\-mor\-phism), thus defining $\theta_n:A \to B_n$.
For $m\in\Lambda$, define $\alpha_m\coloneqq \pi_{I \cap J} \circ (\alpha_{m,n})_{n\in\Lambda}$, where
\begin{equation} \alpha_{m,n} \coloneqq  \begin{cases} \theta_n,\quad &n\geq m; \\ \overline\gamma_n,\quad &\text{otherwise}. \end{cases} \end{equation}
Since $\theta$ is a $^*$-ho\-mo\-mor\-phism, 
		$\|\theta_n(xy)- \theta_n (x)\theta_n (y)\| \to 0$ for all $x,y \in A$; from this it follows that $\alpha_{m,n}$ is also a $^*$-ho\-mo\-mor\-phism.

The first equation of \eqref{eq:thetadef} implies that $\lim_n \|\overline\gamma_n(x)-\theta_n(x)\| = 0$ for all $x \in A$, which in turn implies that 
\begin{equation} \|\alpha_m(x)-\pi_{I\cap J}(\overline\gamma(x))\| = \sup_{n\geq m} \|\overline\gamma_n(x) - \theta_n(x)\| \to 0 \end{equation}
for all $x \in A$.
Thus, $(\alpha_m)_m$ converges in the point-norm topology to the liftable $^*$-ho\-mo\-mor\-phism $\pi_{I\cap J} \circ \overline\gamma$, and since $A$ is $\ell$-open, it follows that $\alpha_m$ is liftable for some sufficiently large $m$.
Let $\beta=(\beta_n)_{n\in\Lambda}: A \rightarrow B$ be a lift of $\alpha_m$, where $\beta_n:A \to B_n$ is a $^*$-ho\-mo\-mor\-phism for each $n$.
The fact that $\beta$ is a lift amounts to
\begin{equation}
\label{eq:betalift}
 (\beta_n(x)-\alpha_{m,n}(x))_{n\in\Lambda} \in I \cap J,\quad \text{for all }x\in A.
\end{equation}
This implies first that $\lim_n \|\beta_n(x)-\theta_n(x)\| = 0$ for all $x \in A$, and combining this with the first equation of \eqref{eq:thetadef}, it follows that 
\begin{equation} \lim_n \|\beta_n(x)-\overline\gamma_n(x)\| = 0, \quad \text{for all }x\in A. \end{equation}
From \eqref{eq:betalift}, we also get that $\pi_{I_n} \circ \beta_n(x)-\pi_{I_n} \circ \theta_n$ for all $n \geq m$, and combining this with the second equation of \eqref{eq:thetadef}, we have that $\beta_n$ is a lift of $\varphi_n$ for $n \geq m$.
In summary, for sufficiently large $n$ we find that $\beta_n$ is a lift of $\varphi_n$ which is point-norm close to $\overline\gamma_n$, in contradiction to our initial assumption.
\end{proof}

	We now pick up some consequences, using ideas from of Blackadar \cite{Bla16}.
We add the proofs for completion.
The first tells us that when $A$ is $\ell$-open, $\Hom(A,B)$ is locally path-connected in a uniform way.

\begin{cor}[{cf.\ \cite[Corollary 4.2]{Bla16}}] \label{cor:PathConnected}
Let $A$ be an $\ell$-open $C^*$-algebra (or more generally, one that satisfies the conclusion of Theorem~\ref{thm:lopen}).
 For any $\epsilon > 0$ and any finite set $\mathcal{F} \subset A$, there is a $\delta >0$ and a finite set $\mathcal{G} \subset A$ such that whenever $B$ is a $C^*$-algebra, $\varphi_0$ and $\varphi_1$ are $^*$-ho\-mo\-mor\-phisms from $A$ to $B/I$ with $\Vert \varphi_0 (u) - \varphi_1(u) \Vert < \delta$ for all $u\in \mathcal{G}$, then there is a point-norm continuous path $(\varphi_t)_{t\in [0,1]}$ of $^*$-ho\-mo\-mor\-phisms from $A$ to $B$ connecting $\varphi_0$ and $\varphi_1$ with $\Vert \varphi_0 (v) - \varphi_t (v) \Vert < \epsilon $ for all $v\in \mathcal{F}$ and $t\in [0,1]$. In particular, $Hom (A, B)$ is locally path-connected for any $C^*$-algebra $B$. 
\end{cor}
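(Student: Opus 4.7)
The plan is to encode a path of $^*$-homomorphisms $A \to B/I$ as a single $^*$-homomorphism into $C([0,1], B/I)$, and thereby convert the path-connectedness question into a lifting question that Theorem~\ref{thm:lopen} answers directly. This reduction is the key idea; the rest is bookkeeping.

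Given $\epsilon > 0$ and finite $\mathcal{F} \subset A$, I would apply Theorem~\ref{thm:lopen} to these inputs to obtain a pair $(\mathcal{G}, \delta)$, and these will be the $\mathcal{G}$ and $\delta$ sought for the corollary. Now, given $B$, $I \subseteq B$, and $\varphi_0, \varphi_1: A \to B/I$ with $\|\varphi_0(u) - \varphi_1(u)\| < \delta$ for all $u \in \mathcal{G}$, I would pass to the ``path algebra'' $B' := C([0,1], B/I)$ together with its ideal $I' := \{f \in B' : f(0) = f(1) = 0\}$. Evaluation at the two endpoints identifies $B'/I'$ isometrically with $B/I \oplus B/I$ (with the max norm on the right).

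Inside this quotient, define $\gamma := (\varphi_0, \varphi_0)$ and $\varphi := (\varphi_0, \varphi_1)$. The map $\gamma$ is manifestly liftable via the constant path $\overline{\gamma}(a)(t) := \varphi_0(a)$, and $\|\gamma(u) - \varphi(u)\| = \|\varphi_0(u) - \varphi_1(u)\| < \delta$ on $\mathcal{G}$. So Theorem~\ref{thm:lopen} produces a lift $\overline{\varphi}: A \to C([0,1], B/I)$ of $\varphi$ satisfying $\|\overline{\varphi}(v) - \overline{\gamma}(v)\| < \epsilon$ for $v \in \mathcal{F}$. Setting $\varphi_t(a) := \overline{\varphi}(a)(t)$ gives the desired path: the endpoints are correct because $\overline{\varphi}$ lifts $(\varphi_0, \varphi_1)$; point-norm continuity in $t$ is inherited from continuity of the functions $\overline{\varphi}(a) \in C([0,1], B/I)$; and the supremum-norm bound on $\overline{\varphi}(v) - \overline{\gamma}(v)$ translates into the uniform estimate $\|\varphi_t(v) - \varphi_0(v)\| < \epsilon$ on $\mathcal{F}$. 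Local path-connectedness of $\Hom(A, B)$ then follows by specializing to $I = 0$.

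The only step requiring genuine thought is the choice of path algebra. Once one notices that $C([0,1], B/I)$ modulo the ideal of functions vanishing at the endpoints presents $B/I \oplus B/I$ as a quotient in which closeness of $\gamma$ and $\varphi$ exactly matches closeness of $\varphi_0$ and $\varphi_1$, the corollary drops out of Theorem~\ref{thm:lopen} with no further obstacle. The fact that closeness is controlled uniformly in $t$ is also free, because the norm on $C([0,1], B/I)$ is already a supremum over $t$.
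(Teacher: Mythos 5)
Your proof is correct and follows essentially the same route as the paper: the paper also applies Theorem~\ref{thm:lopen} after passing to the path algebra $C([0,1],\,\cdot\,)$ with the ideal of functions vanishing at the endpoints, whose quotient is the direct sum of two copies of the target, lifting $(\varphi_0,\varphi_1)$ near the constantly-liftable $(\varphi_0,\varphi_0)$. The only (cosmetic) difference is that the paper carries out the construction with target $B$ (reading the statement's ``$B/I$'' as the single algebra in question), whereas you use $B/I$; since the statement concerns one target algebra, this changes nothing.
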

\begin{proof} 
For any $\epsilon>0$ and finite set $\mathcal{F}$, choose $\delta>0$ and finite set $\mathcal{G}$ as in Theorem \ref{thm:lopen}. Let $D\coloneqq  C([0,1], B)$ and $I\coloneqq  C_0 ((0,1), B)$. Then $D/I \cong B \oplus B$. Define $^*$-ho\-mo\-mor\-phisms $ \gamma, \varphi : A \rightarrow D/I$ by $\gamma(x) \coloneqq (\varphi_0 (x), \varphi_0(x))$ and  $\varphi(x) \coloneqq (\varphi_0 (x), \varphi_1(x))$. Then $\gamma$ lifts to a $^*$-ho\-mo\-mor\-phism $\id_{C([0,1])} \otimes \varphi_0: A \rightarrow D$, and so these two maps satisfy the hypothesis of Theorem \ref{thm:lopen}. Hence the conclusion of Theorem \ref{thm:lopen} holds and there exists a $^*$-ho\-mo\-mor\-phism $\overline{\varphi}=(\overline\varphi_t)_{t\in[0,1]}: A \rightarrow D$ such that 
\begin{equation}
\label{eq:PathConnected1}
\Vert \overline{\gamma}(a) - \overline{\varphi}(a) \Vert < \epsilon\quad \text{for all }a\in \mathcal{F}.
\end{equation}
Then $\overline\varphi$ is a homotopy of $^*$-ho\-mo\-mor\-phisms $A \to B$ connecting $\varphi_0$ to $\varphi_1$, and \eqref{eq:PathConnected1} tells us that $\|\varphi_t(a)-\varphi_0(a)\|<\epsilon$ for all $a\in\mathcal F$, as required.
\end{proof}

\begin{ex}
	Consider the topologist' sine curve:
\begin{equation} X \coloneqq \{(x,y): y= \sin(\frac{\pi}{x}), 0< x\leq 1 \} \cup \{(0,y): -1\leq y\leq 1\}. \end{equation}
Then $\Hom(C(X),\mathbb C)=X$, which is not locally path-connected; therefore by the above corollary, $C(X)$ is not $\ell$-open.
\end{ex}

\begin{thm}[{Homotopy Lifting Theorem; cf.\ \cite[Theorem 5.1]{Bla16}}]\label{thm:HomotopyLifting}
Let $A$ be an $\ell$-open $C^*$-algebra (or more generally, one that satisfies the conclusion of Theorem~\ref{thm:lopen}).
Let $B$ be a $C^*$-algebra, $I$ a closed ideal of $B$, $(\varphi_t)_{t\in [0,1]}$ a point-norm continuous path of $^*$-ho\-mo\-mor\-phisms from $A$ to $B/I$. Suppose $\varphi_0$ lifts to a $^*$-ho\-mo\-mor\-phism $\overline{\varphi_0 } : A \rightarrow B$. Then there is a point-norm continuous path $(\overline{\varphi_t} )_{t\in [0,1]}$ of $^*$-ho\-mo\-mor\-phisms from $A$ to $B$ starting at $\overline{\varphi_0}$ such that $\overline{\varphi_t}$ is a lift of $\varphi_t$ for all $t\in[0,1]$.
\end{thm}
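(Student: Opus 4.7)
The plan is to patch together finitely many local lifts, each produced by applying Theorem~\ref{thm:lopen} to a mapping-path-style quotient. Since the conclusion asks only for existence of a continuous lifted path (with no quantitative requirement), I may fix any convenient $\epsilon > 0$ and finite set $\mathcal{F} \subset A$, and then extract $\delta > 0$ and finite $\mathcal{G} \subset A$ from Theorem~\ref{thm:lopen}. Because each $t \mapsto \varphi_t(a)$ is continuous on the compact interval $[0,1]$, I can choose a partition $0 = s_0 < s_1 < \cdots < s_N = 1$ fine enough that
\begin{equation}
\sup_{t \in [s_{i-1}, s_i]} \|\varphi_{s_{i-1}}(a) - \varphi_t(a)\| < \delta \qquad \text{for every } i \text{ and every } a \in \mathcal{G}.
\end{equation}

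The local lifting step works as follows. Fix a subinterval $[s', s] \subseteq [0,1]$ and a lift $\overline{\psi} : A \to B$ of $\varphi_{s'}$. Set $D := C([s',s], B)$ and
\begin{equation}
J := \{ f \in D : f(s') = 0 \text{ and } f(t) \in I \text{ for all } t \in [s',s] \},
\end{equation}
which is a closed two-sided ideal of $D$. A routine computation (using continuous selection to lift a continuous path in $B/I$ to a continuous path in $B$ with prescribed value at $s'$) identifies $D/J$ with the fibred product
\begin{equation}
\{ (b, g) \in B \oplus C([s', s], B/I) : g(s') = \pi_I(b) \},
\end{equation}
with norm $\|(b,g)\| = \max(\|b\|, \|g\|_\infty)$. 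Now define $^*$-homomorphisms $\gamma, \varphi_\bullet : A \to D/J$ by
\begin{equation}
\gamma(a) := \bigl(\overline{\psi}(a),\ \text{constant path } \varphi_{s'}(a)\bigr), \qquad \varphi_\bullet(a) := \bigl(\overline{\psi}(a),\ (\varphi_t(a))_{t \in [s',s]}\bigr).
\end{equation}
Then $\gamma$ lifts to the constant path $t \mapsto \overline{\psi}(a)$ in $D$, and the norm formula yields $\|\gamma(a) - \varphi_\bullet(a)\|_{D/J} = \sup_{t} \|\varphi_{s'}(a) - \varphi_t(a)\|$.

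I then iterate. Starting with $\overline{\varphi_0}$ (given) and proceeding inductively on $i$, I apply the above construction on $[s_{i-1}, s_i]$ with starting lift $\overline{\varphi_{s_{i-1}}}$ obtained from the previous step. By the choice of partition, $\gamma$ and $\varphi_\bullet$ are within $\delta$ on $\mathcal{G}$, so Theorem~\ref{thm:lopen} produces a lift $A \to D = C([s_{i-1}, s_i], B)$ of $\varphi_\bullet$; unpacking, this is exactly a point-norm continuous path of $^*$-homomorphisms $(\overline{\varphi_t})_{t \in [s_{i-1}, s_i]}$ lifting $(\varphi_t)$ and starting at $\overline{\varphi_{s_{i-1}}}$. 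The paths on consecutive intervals agree at the common endpoint $s_i$, so concatenation yields the desired continuous path on $[0,1]$.

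The main obstacle is the identification of $D/J$ with the claimed fibred product, specifically surjectivity of the natural map $f \mapsto (f(s'), \pi_I \circ f)$ onto that fibred product with the correct quotient norm; this amounts to a Bartle-Graves-type continuous selection for the quotient $B \to B/I$ with a prescribed initial value. Once this standard ingredient is in place, everything else is bookkeeping, since the closeness conclusion of Theorem~\ref{thm:lopen} supplies each local lift automatically and compactness of $[0,1]$ reduces the global problem to finitely many such steps.
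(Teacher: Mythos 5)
Your proposal is correct and is essentially the paper's own argument: partition $[0,1]$ so that consecutive $\varphi_t$'s are within the $\delta$ from Theorem~\ref{thm:lopen} on $\mathcal{G}$, realize each subinterval's lifting problem in $D=C([s_{i-1},s_i],B)$ modulo $J=\{f: f(s_{i-1})=0,\ f(t)\in I\}$ with $D/J$ the fibred product $C([s_{i-1},s_i],B/I)\oplus_{\pi_I}B$, lift the constant path to apply the theorem, and concatenate (the exact $B$-coordinate at the left endpoint is what makes the pieces match, as you note). The only ingredient you flag as an obstacle, surjectivity of $f\mapsto(f(s_{i-1}),\pi_I\circ f)$ onto the fibred product, follows directly from the paper's Lemma~\ref{lem:CRT} applied to the ideals $\{f:f(s_{i-1})=0\}$ and $C([s_{i-1},s_i],I)$, so no Bartle--Graves selection is needed.
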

\begin{proof}
	Take an arbitrary finite set $\mathcal F$ of $A$ and real number $\epsilon >0$, and let $\mathcal G,\delta$ be given by Theorem~\ref{thm:lopen}.
We can find a partition $t_0 =0 < t_1 < t_2 < \cdots < t_n =1$ such that $\Vert \varphi_t (a)- \varphi_s (a) \Vert < \delta $ for all $a\in \mathcal{G}$ whenever $t, s \in [t_{i-1} , t_i]$, for any $i$.

Let $D\coloneqq C([0, t_1], B)$ and $J\coloneqq C_0((0,t_1], I)$, which is an ideal of $D$, so that
\begin{equation}\begin{split} 
D/J &\cong C([0,t_1]: B/I) \oplus_{\pi_I} B \\
&= \{(f,b)\in C([0,t_1]: B/I) \oplus B : f(0)=\pi_I (b)\}. \end{split} \end{equation}
Making this identification, define $^*$-ho\-mo\-mor\-phisms $\gamma\coloneqq (\id_{C([0,t_1])}\otimes \varphi_0)\oplus \overline\varphi_0, \theta\coloneqq \varphi|_{[0,t_1]} \oplus \overline\varphi_0: A \rightarrow D/J$ (where $\varphi|_{[0,t_1]}$ denotes the $^*$-ho\-mo\-mor\-phism $A \to C([0,t_1],B/I)$ given by restricting the homotopy $(\varphi_t)$ to $[0,t_1]$).
Then $\gamma$ lifts to the $^*$-ho\-mo\-mor\-phism $\id_{C([0,t_1])} \otimes \overline\varphi$, so by Theorem~\ref{thm:lopen},
$\varphi$ lifts, giving a continuous path of lifts $(\overline{\varphi_t})$ of $(\varphi_t)$ for $t\in [0, t_1]$.
Continuing the same process for successive intervals $[t_1, t_2], \ldots, [t_{n-1}, t_n]$, we get the required continuous path $(\overline{\varphi_t})_{t\in[0,1]}$, such that $\overline\varphi_t$ lifts $\varphi_t$ for all $t\in [0,1]$.
\end{proof}
\begin{prop}\label{prop: GeneralHLT=UnitalHLT}
Let $A$ be an unital $C^*$-algebra. Then $A$ satisfies the conclusion of the Homotopy Lifting Theorem if and only if $A$ satisfies the conclusion in the category of unital $C^*$-algebras and unital $^*$-morphisms.
\end{prop}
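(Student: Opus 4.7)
The $(\Rightarrow)$ direction is a short projection argument. Given unital data $(B, I, (\varphi_t), \overline{\varphi_0})$, I would apply the general Homotopy Lifting Theorem to produce a continuous lift path $(\overline{\varphi_t})$. Each $\overline{\varphi_t}(1_A)$ is a projection in $B$ whose image in $B/I$ is $1_{B/I}$, so $1_B - \overline{\varphi_t}(1_A)$ is a norm-continuous path of projections in $I$ starting at $0$; since nonzero projections have norm $1$, this path must be identically zero, hence each $\overline{\varphi_t}$ is unital.

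For $(\Leftarrow)$, suppose $A$ satisfies the unital Homotopy Lifting Theorem, and consider general data $(B, I, (\varphi_t), \overline{\varphi_0})$. Set $p := \overline{\varphi_0}(1_A) \in B$ and $q_t := \varphi_t(1_A) \in B/I$; the image of $\overline{\varphi_0}$ lies in the unital $C^*$-algebra $pBp$, whose ideal $pIp$ has quotient $pBp/pIp \cong q_0(B/I)q_0$. The plan is to reduce to the unital conclusion inside $pBp$. The key task is to trivialize the varying family of corners $\overline{q}_t B \overline{q}_t$ onto $pBp$: first I would lift the path $(q_t)$ to a continuous path $(\overline{q}_t)$ of projections in $B$ with $\overline{q}_0 = p$; then construct a norm-continuous path of unitaries $(w_t)$ in the unitization $\tilde{B}$ with $w_0 = 1_{\tilde{B}}$ and $w_t p w_t^* = \overline{q}_t$. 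Both constructions proceed via a sufficiently fine partition of $[0,1]$ using the explicit local formula $u = z|z|^{-1}$ with $z = q q' + (1-q)(1-q')$ for the conjugating unitary between nearby projections, lifted through the Hermitian logarithm and concatenated across pieces.

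With the trivialization in hand, define $\varphi_t' := \mathrm{Ad}(\pi_I(w_t)^*) \circ \varphi_t : A \to q_0(B/I)q_0$, a point-norm continuous path of unital $^*$-homomorphisms with $\varphi_0' = \varphi_0$, lifted initially by the unital $\overline{\varphi_0} : A \to pBp$. The unital Homotopy Lifting Theorem applied inside the unital $C^*$-algebra $pBp$ yields a continuous path $(\overline{\varphi_t}')$ of unital lifts. Setting $\overline{\varphi_t}(a) := w_t \overline{\varphi_t}'(a) w_t^*$ gives a continuous path landing in $\overline{q}_t B \overline{q}_t \subset B$ (since $B$ is an ideal of $\tilde{B}$), and a direct check confirms it is a lift of $(\varphi_t)$ starting at $\overline{\varphi_0}$, as required. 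I expect the main obstacle to be the trivialization step, particularly the care needed when $B$ is non-unital so that the unitaries $w_t$ must be drawn from $\tilde{B}$ while all the $C^*$-algebraic outputs stay in $B$.
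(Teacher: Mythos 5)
Your proposal is correct and follows essentially the same route as the paper: for the nontrivial direction, both arguments lift the projection path $q_t=\varphi_t(1)$ to a path of projections in $B$ starting at $\overline{\varphi_0}(1)$, conjugate into the corner $q_0(B/I)q_0$ so the homotopy becomes unital, apply the unital Homotopy Lifting Theorem in $pBp$ (with ideal $pIp$), and conjugate back. The only differences are cosmetic: the paper obtains the lifted projection path by invoking the already-established Homotopy Lifting Theorem for the $\ell$-open algebra $\mathbb{C}$ and conjugates with a path of partial isometries $v_t$ rather than unitaries in $\tilde{B}$, while your hands-on trivialization (partition, $z=qq'+(1-q)(1-q')$, lifted Hermitian logarithms) and your explicit observation that a continuous path of projections starting at $0$ is constant (for the easy direction, which the paper asserts without detail) are sound substitutes.
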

\begin{proof}
Suppose $A$ satisfies the conclusion of the Homotopy Lifting Theorem in the category of unital $C^*$-algebras and unital $^*$-morphisms. Let $B$ be a $C^*$-algebra, $I$ a closed ideal of $B$, $(\varphi_t)_{t\in [0,1]}$ a point-norm continuous path of $^*$-ho\-mo\-mor\-phisms from $A$ to $B/I$, and $\overline{\varphi_0 } : A \rightarrow B$ a lift of $\varphi_0$. Set $q_0 \coloneqq \varphi_0 (1), q_1 \coloneqq \varphi_1 (1)$, and $p_0 \coloneqq \overline{\varphi_0}(1)$. Then, $q_0$ is homotopic to $q_1$. Since $\mathbb{C}$ is $\ell$-open, Theorem \ref{thm:HomotopyLifting} implies that there exists a continuous path of projections $(p_t)_{t\in [0,1]}$ connecting $p_0$ and $p_1$ with $q_1 = \pi_I (p_1)$. Consequently, we can find a continuous path of partial isometries $(v_t)_{t\in [0,1]}$ such that 
\begin{equation}
\begin{split}
v_0 &= p_0 , \\
v_t ^* v_t &= p_0 \,\, \forall \,\,t , \\
v_t v_t ^* &= p_t .
\end{split}
\end{equation}
Let $\psi_1 \coloneqq \pi_I (v_1 ^*) \varphi_1 \pi_{I}( v_1) :A \rightarrow q_0 (B/I )q_0$. Then, $(\pi_I (v_t ^*)\varphi_t \pi_I (v_t))_{t\in [0,1]}$ is a point-norm continuous paths of unital $^*$-ho\-mo\-mor\-phisms from $A$ to $q_0 (B/I )q_0$. 
Using the conclusion of the Homotopy Lifting Theorem in the unital category, $\psi_1$ lifts to a unital  $^*$-ho\-mo\-mor\-phism $\overline{\alpha_1}: A \rightarrow p_0 B p_0$ and there is a point-norm continuous path $(\overline{\alpha_t} )_{t\in [0,1]}$ of unital $^*$-ho\-mo\-mor\-phisms connecting $\varphi_0$ to $\overline{\alpha_1}$. Moreover, $\overline{\alpha_t}$ is a lift of $\pi_I (v_t ^*)\varphi_t \pi_I (v_t)$ for each $t\in [0,1]$. Set $\overline{\varphi_t}\coloneqq v_t \alpha_t v_t ^* : A \rightarrow B$. Then, $(\overline{\varphi_t} )_{t\in [0,1]}$ defines a point-norm continuous path of $^*$-ho\-mo\-mor\-phisms from $A$ to $B$ starting at $\overline{\varphi_0}$ such that $\overline{\varphi_t}$ is a lift of $\varphi_t$ for all $t\in[0,1]$. The proof of the converse follows directly from the statement.
\end{proof}
\begin{ex}\label{ex:AFalgebrasHLT} Using Proposition \ref{prop: GeneralHLT=UnitalHLT} and \cite[Theorem 3.5]{Phillips+Raeburn}, unital $AF$-algebras satisfy the condition of the Homotopy Lifting Theorem.
\end{ex}
\begin{rem} Conway (\cite{Conway75, Conway77}) studied a restricted version of the homotopy lifting theorem, which he called the $C^*$-covering homotopy property. He considered Theorem \ref{thm:HomotopyLifting} in the case where $B/I$ is the Calkin algebra. 

\end{rem}
Combining all the previous theorems and corollaries, we have the following characterization of $\ell$-open $C^*$-algebra.
\begin{thm}\label{thm:Main}
		Let $A$ be a $C^*$-algebra. Then the following are equivalent
		\begin{enumerate}
			\item $A$ is $\ell$-open.
			\item The system of maps $\Hom(A,B) \to \Hom(A,B/J)$ (over all $C^*$-algebras $B$ and ideals $J$) is uniformly open, as in the conclusion of Theorem~\ref{thm:lopen}
			\item $A$ satisfies the conclusion of the Homotopy Lifting Theorem (Theorem~\ref{thm:HomotopyLifting}) and $\Hom (A, B)$ is locally path-connected for all $C^*$-algebras $B$.
		\end{enumerate}
\end{thm}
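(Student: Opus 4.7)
The plan is to establish the circle of implications (i) $\Rightarrow$ (ii) $\Rightarrow$ (iii) $\Rightarrow$ (i). The first two implications are essentially free from the machinery already developed in this section: (i) $\Rightarrow$ (ii) is exactly the content of Theorem~\ref{thm:lopen}, and (ii) $\Rightarrow$ (iii) is obtained by concatenating Corollary~\ref{cor:PathConnected} (local path-connectedness of $\Hom(A,B)$) with Theorem~\ref{thm:HomotopyLifting} (the Homotopy Lifting Theorem), both of which were stated to hold under the weaker hypothesis that $A$ satisfies the conclusion of Theorem~\ref{thm:lopen}, i.e.\ under condition (ii).

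The substantive implication is (iii) $\Rightarrow$ (i). Fix a $C^*$-algebra $B$, an ideal $I \subseteq B$, and a liftable $^*$-homomorphism $\varphi \in \Hom(A, B, I)$ with chosen lift $\overline{\varphi} : A \to B$. I need to produce a point-norm neighbourhood of $\varphi$ in $\Hom(A, B/I)$ contained in $\Hom(A, B, I)$. Applying local path-connectedness of $\Hom(A, B/I)$, I get a neighbourhood $V$ of $\varphi$ such that every $\psi \in V$ is joined to $\varphi$ by a point-norm continuous path $(\psi_t)_{t\in[0,1]}$ of $^*$-homomorphisms $A \to B/I$ with $\psi_0 = \varphi$ and $\psi_1 = \psi$. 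The Homotopy Lifting Theorem then applies to this path with starting lift $\overline{\varphi}$, yielding a point-norm continuous path $(\overline{\psi_t})_{t\in[0,1]}$ of $^*$-homomorphisms $A \to B$ with $\overline{\psi_0} = \overline{\varphi}$ and $\pi_I \circ \overline{\psi_t} = \psi_t$ for all $t$. In particular $\overline{\psi_1}$ is a lift of $\psi$, so $\psi \in \Hom(A, B, I)$. Hence $V \subseteq \Hom(A, B, I)$, as required.

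I do not expect a real obstacle here: the only direction that is not an immediate repackaging of earlier results is (iii) $\Rightarrow$ (i), and there the argument is a clean two-step assembly — local path-connectedness turns a nearby $\psi$ into the endpoint of a path starting at $\varphi$, and the Homotopy Lifting Theorem promotes the given lift at the start of the path to a lift at the end. One small technical point worth flagging is that local path-connectedness must be invoked at the specific point $\varphi$, so I should be careful that the neighbourhood $V$ provided is a neighbourhood of $\varphi$ itself (not just an abstract basic open set), but this is built into the definition.
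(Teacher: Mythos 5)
Your proposal is correct and follows the same route as the paper: (i)$\Rightarrow$(ii) is Theorem~\ref{thm:lopen}, (ii)$\Rightarrow$(iii) is Corollary~\ref{cor:PathConnected} together with Theorem~\ref{thm:HomotopyLifting}, and for (iii)$\Rightarrow$(i) the paper likewise uses local path-connectedness to connect nearby homomorphisms to the liftable one by a path and then lifts along the path via the Homotopy Lifting Theorem. The only difference is cosmetic: the paper phrases (iii)$\Rightarrow$(i) in terms of a net converging to the liftable map, whereas you argue directly with a neighbourhood of it.
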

\begin{proof}
(i)$\Rightarrow$(ii) is Theorem~\ref{thm:lopen} and (ii)$\Rightarrow$(iii) is by Corollary~\ref{cor:PathConnected} and Theorem~\ref{thm:HomotopyLifting}.

To prove that (iii)$\Rightarrow$(i), let $\phi_n : A \rightarrow B/I$ be a net of $^*$-ho\-mo\-mor\-phisms which converges point-norm to a liftable $^*$-ho\-mo\-mor\-phism $\phi : A \rightarrow B/I$. Since $\Hom(A, B/I)$ is locally path-connected, $\phi_n$ is homotopic to $\phi$ for sufficiently large $n$.
The conclusion of the Homotopy Lifting Theorem then implies that $\phi_n$ is liftable for these $n$.
This shows that $\Hom(A,B,I)$ is open in $\Hom(A,B/I)$, as required.
\end{proof}

\begin{ex} 
Satisfying the condition of the Homotopy Lifting Theorem doesn't guarantee $\ell$-openness of $C^*$-algebras. $M_{2^\infty}$ satisfies the condition of the Homotopy Lifting Therem (see Example \ref{ex:AFalgebrasHLT}), but it is not an $\ell$-open $C^*$-algebra. To see that $M_{2^\infty}$ is not $\ell$-open, suppose otherwise. 
Using any finite set $\mathcal F \subseteq M_{2^\infty}$ and any $\epsilon>0$, obtain $\delta >0$ and a finite set $\mathcal G\subset M_{2^ \infty}$ according to Theorem \ref{thm:lopen}. Without loss of generality, we can assume $\mathcal G \subset M_{2^k}$ for some $k$.

Let us set $B\coloneqq B(\mathcal H)$ and $J\coloneqq \mathcal K$, so that $B/J$ is the Calkin algebra.
Let $\phi_1,\phi_2:A \to B/J$ be $^*$-ho\-mo\-mor\-phisms such that $\phi_1$ is liftable but $\phi_2$ is not (which exists by \cite{Thayer75}).
Define $\varphi_i \coloneqq \id_{M_{2^k}} \otimes \phi_i : M_{2^k} \otimes M_{2^\infty}\cong M_{2^\infty} \rightarrow M_{2^k} \otimes (B/J) \cong (M_{2^k} \otimes B)/(M_{2^k} \otimes J)$.
Then we have that $\varphi_1(a) = \varphi_2 (a)$ for all $a \in \mathcal G$.
Hence, Theorem~\ref{thm:lopen} tells us that since $\varphi_1$ is liftable, so is $\varphi_2$. 
The Ext-class of $\varphi_2$ is $2^k$ times the Ext-class of $\phi_2$; $Ext(M_{2^\infty})$ is the $2$-adic integers, which is torsion-free, it follows that $\varphi_2$ is not liftable, a contradiction.
Hence, $M_{2^\infty}$ is not $\ell$-open.
\end{ex}

The characterization of $\ell$-openness confirms a conjecture of Blackadar \cite[Page 299]{Bla16}, as follows. 

\begin{cor}\label{cor:openimpliesclosed}
	Let $A$ be an $\ell$-open $C^*$-algebra. Then $A$ is $\ell$-closed.
\end{cor}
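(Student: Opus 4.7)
The plan is to derive $\ell$-closedness as an immediate consequence of the characterization already established in Theorem~\ref{thm:Main}, in particular the implication (i) $\Rightarrow$ (iii). Since $A$ is $\ell$-open, for every $C^*$-algebra $B$ and ideal $I\subseteq B$, the space $\Hom(A,B/I)$ is locally path-connected, and $A$ satisfies the conclusion of the Homotopy Lifting Theorem (Theorem~\ref{thm:HomotopyLifting}).

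Given these two tools, I would argue directly that $\Hom(A,B,I)$ is closed in $\Hom(A,B/I)$. Fix a $C^*$-algebra $B$ and an ideal $I$, and let $\phi \in \Hom(A,B/I)$ be a limit point (in the point-norm topology) of liftable $^*$-ho\-mo\-mor\-phisms $\phi_n:A \to B/I$. By local path-connectedness of $\Hom(A,B/I)$ at $\phi$, there is a neighbourhood $U$ of $\phi$ in $\Hom(A,B/I)$ such that every $\psi \in U$ can be joined to $\phi$ by a point-norm continuous path $(\psi_t)_{t\in [0,1]}$ of $^*$-ho\-mo\-mor\-phisms from $A$ to $B/I$ with $\psi_0 = \psi$ and $\psi_1 = \phi$. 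Since $\phi_n \to \phi$, we can pick some $n$ with $\phi_n \in U$, and choose such a path starting at $\psi_0 \coloneqq \phi_n$ and ending at $\psi_1 \coloneqq \phi$.

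Now $\phi_n$ is liftable by hypothesis, so the Homotopy Lifting Theorem applied to the path $(\psi_t)_{t\in[0,1]}$ produces a continuous path $(\overline{\psi_t})$ of $^*$-ho\-mo\-mor\-phisms $A \to B$ with $\overline{\psi_t}$ a lift of $\psi_t$ for every $t$. In particular, $\overline{\psi_1}$ is a lift of $\phi$, so $\phi \in \Hom(A,B,I)$. This shows $\Hom(A,B,I)$ is closed in $\Hom(A,B/I)$, establishing $\ell$-closedness.

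There is no real obstacle once the characterization of $\ell$-openness via the conjunction of the Homotopy Lifting Theorem and local path-connectedness is in hand; the entire content of the corollary is that these two conditions together promote point-norm limits of liftable maps to liftable maps. The only small subtlety is to ensure that the path we choose uses the correct endpoint convention so that we are lifting from $\phi_n$ (which lifts) toward $\phi$ (which we wish to lift), rather than the other way around, but this is just a matter of reversing parametrization if needed.
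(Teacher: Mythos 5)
Your argument is correct, and it is not circular: Theorem~\ref{thm:Main} is proved from Theorem~\ref{thm:lopen}, Corollary~\ref{cor:PathConnected} and Theorem~\ref{thm:HomotopyLifting}, none of which depends on this corollary. But it takes a different route from the paper. The paper applies Theorem~\ref{thm:lopen} directly: fixing $(\mathcal F,\epsilon)$ and taking the corresponding $(\mathcal G,\delta)$, if $\phi$ is a point-norm limit of liftable maps $\phi_n$, then some $\phi_m$ is $\delta$-close to $\phi$ on $\mathcal G$, and since the hypothesis of Theorem~\ref{thm:lopen} only asks that \emph{one} of the two nearby maps lifts, liftability transfers immediately from $\phi_m$ to $\phi$ --- a two-line quantitative argument exploiting the symmetry built into the uniform openness statement. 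You instead pass through condition (iii) of Theorem~\ref{thm:Main}: local path-connectedness of $\Hom(A,B/I)$ gives a path from a liftable $\phi_n$ to $\phi$, and the Homotopy Lifting Theorem (whose hypothesis requires the \emph{initial} map to lift, which your endpoint convention correctly arranges) pushes the lift along the path to its endpoint $\phi$. Your proof is in effect the mirror image of the paper's proof of (iii)$\Rightarrow$(i) in Theorem~\ref{thm:Main}, with the roles of the limit and the approximants exchanged, and it makes transparent that the conjunction of the Homotopy Lifting Theorem with local path-connectedness forces $\Hom(A,B,I)$ to be simultaneously open and closed in $\Hom(A,B/I)$; the paper's proof is shorter and stays at the level of the uniform $(\mathcal G,\delta)$-estimate, while yours is more topological and conceptually explains why openness and closedness come as a pair. (The paper also notes a third, separable-case argument via Theorem~\ref{thm:MainClosed}; your argument, like the paper's, needs no separability.)
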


\begin{proof}
Fix a $\epsilon>0$ and a finite set $\mathcal{F}$ and choose a $\delta>0$ and finite set $\mathcal{G}$ as in Theorem~\ref{thm:lopen}.
Let $\phi_n : A \rightarrow B/I$ be a net of liftable $^*$-ho\-mo\-mor\-phisms which converges point-norm to a $^*$-ho\-mo\-mor\-phism $\phi : A \rightarrow B/I$. We can find $m$ such that $\Vert \phi_m (u) - \phi (u) \Vert < \delta$ for all $u\in \mathcal{G}$. Since $\phi_m$ is liftable, the conclusion of Theorem \ref{thm:lopen} implies that $\phi$ is liftable. Hence, $A$ is $\ell$-closed.
\end{proof}

\section{Characterization of $\ell$-closed $C^*$-algebras}

We now characterize $\ell$-closed $C^*$-algebras, showing that the condition is equivalent to a uniform relative openness of the map $\Hom(A,B) \to \Hom(A,B/I)$.
We require separability for this characterization, and one direction uses a Cauchy sequence argument.

\begin{thm}\label{thm:MainClosed}
Let $A$ be a separable $C^*$-algebra. Then the following are equivalent:
\begin{enumerate}
\item $A$ is $\ell$-closed.
\item For any $\epsilon > 0$ and finite set $\mathcal{F} \subset A$, there is a $\delta >0$ and a finite set $\mathcal{G} \subset A$ such that whenever $B$ is a $C^*$-algebra, $I$ is a closed ideal of $B$, $\psi$ and $\phi$ are $^*$-ho\-mo\-mor\-phisms from $A$ to $B$ with $\Vert \pi_{I} \circ \phi (u) - \pi_{I} \circ \psi(u) \Vert < \delta$ for all $u\in \mathcal{G}$, then there exists a $^*$-ho\-mo\-mor\-phism $\eta: A \rightarrow B$ such that $\Vert \phi(v) - \eta(v) \Vert < \epsilon$ for all $v\in \mathcal{F}$ and $\pi_{I} \circ \psi =\pi_{I} \circ \eta$. 
\end{enumerate}
\end{thm}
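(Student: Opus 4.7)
For (ii) $\Rightarrow$ (i), the plan is a Cauchy-sequence-of-lifts argument, using (ii) as a uniform replacement for $\ell$-closedness. By separability of $A$, the point-norm topology on $\Hom(A, B/I)$ is metrizable, so $\ell$-closedness reduces to the sequential statement: if $\varphi_n \to \varphi$ in $\Hom(A, B/I)$ with each $\varphi_n$ liftable, then $\varphi$ lifts. Fix an increasing chain of finite subsets $\mathcal F_k \subset A$ with dense union and $\epsilon_k := 2^{-k}$, and extract $\delta_k, \mathcal G_k$ from (ii). Using that $(\varphi_n)$ is Cauchy on finite subsets, pass to a subsequence with $\|\varphi_{n_k}(u) - \varphi_{n_{k+1}}(u)\| < \delta_k$ for $u \in \mathcal G_k$. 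Starting from any lift $\overline{\varphi_1}$ of $\varphi_{n_1}$, I inductively apply (ii) with $\phi := \overline{\varphi_k}$ and $\psi$ any chosen lift of $\varphi_{n_{k+1}}$ to produce $\overline{\varphi_{k+1}}$ lifting $\varphi_{n_{k+1}}$ with $\|\overline{\varphi_{k+1}}(v) - \overline{\varphi_k}(v)\| < 2^{-k}$ for $v \in \mathcal F_k$. Contractivity of $^*$-homomorphisms promotes Cauchyness on $\bigcup_k \mathcal F_k$ to Cauchyness on all of $A$, and the limit is a $^*$-homomorphism lifting $\varphi$.

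For (i) $\Rightarrow$ (ii), I argue by contradiction, mimicking the product-and-Chinese-remainder structure of the proof of Theorem~\ref{thm:lopen}. Suppose (ii) fails for some $\epsilon_0 > 0$ and finite $\mathcal F_0 \subset A$. Choose $\mathcal G_n \subset A$ increasing with dense union and $\delta_n \to 0$, and extract counterexamples $B_n, I_n, \phi_n, \psi_n$. Set $B := \prod_n B_n$, $I := \prod_n I_n$, $J := \{(b_n) : \lim_n \|b_n\| = 0\}$, and $\overline\phi := (\phi_n)_n$, $\overline\psi := (\psi_n)_n : A \to B$. A density argument using $\delta_n \to 0$, combined with closedness of $I+J$ (sums of closed ideals in a $C^*$-algebra are closed), yields $\pi_{I+J} \overline\phi = \pi_{I+J} \overline\psi$. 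Lemma~\ref{lem:CRT} then produces a $^*$-homomorphism $\theta : A \to B/(I \cap J)$ with $\pi_I \theta = \pi_I \overline\psi$ and $\pi_J \theta = \pi_J \overline\phi$.

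The crux is exhibiting $\theta$ as a point-norm limit of liftable $^*$-homomorphisms, in order to invoke $\ell$-closedness. Define $\mu_m : A \to B$ componentwise by $\mu_{m,n} := \psi_n$ for $n < m$ and $\mu_{m,n} := \phi_n$ for $n \geq m$, and set $\alpha_m := \pi_{I \cap J} \circ \mu_m$, which is liftable (to $\mu_m$). Via the isometric embedding $B/(I \cap J) \hookrightarrow B/I \oplus B/J$ coming from Lemma~\ref{lem:CRT}, convergence $\alpha_m \to \theta$ is checked factor by factor: the $\pi_J$-component of $\alpha_m - \theta$ vanishes because the difference $\mu_m - \overline\phi$ is eventually zero coordinate-wise and the $B/J$-norm equals $\limsup_n$ of the coordinate norms; the $\pi_I$-component's tail is bounded by $\sup_{n \geq m} \delta_n \to 0$ on each $\mathcal G_N$ once $m \geq N$, and extends to all of $A$ by density and contractivity. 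By $\ell$-closedness, $\theta$ lifts to some $\tilde\theta = (\tilde\theta_n)_n : A \to B$. Then each $\tilde\theta_n$ lifts $\pi_{I_n} \psi_n$ and $\|\tilde\theta_n(v) - \phi_n(v)\| \to 0$ for $v \in \mathcal F_0$, so for sufficiently large $n$, taking $\eta_n := \tilde\theta_n$ violates the defining property of the counterexample $(B_n, I_n, \phi_n, \psi_n)$.

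The main obstacle I anticipate is the verification of $\alpha_m \to \theta$ in the point-norm topology of $B/(I \cap J)$, which demands careful simultaneous control of both projections in the fibered-product description of the quotient.
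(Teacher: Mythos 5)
Your proposal is correct and follows essentially the same route as the paper: for (i)$\Rightarrow$(ii) the same contradiction argument via $B=\prod B_n$, $I=\prod I_n$, the $c_0$-ideal $J$, the Chinese remainder map $\theta$, and the hybrid liftable maps $(\psi_1,\dots,\psi_{m-1},\phi_m,\phi_{m+1},\dots)$ converging to $\theta$; and for (ii)$\Rightarrow$(i) the same iterated application of (ii) along a fast subsequence to produce a Cauchy sequence of lifts whose limit lifts the limit map. The only differences are cosmetic (e.g.\ writing $J$ as the lim-zero sequences rather than $\bigoplus_n B_n$, and $\delta_n\to 0$ in place of $1/n$).
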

\begin{proof}
(i)$\Rightarrow$(ii).
Let $(\mathcal{G}_n )$ be an increasing sequence of finite subsets of $A$ whose union is dense in $A$. Suppose (ii) is false for a fixed $\epsilon >0$ and finite set $\mathcal{F}\subset A$. Then, there are $C^*$-algebras $B_n$ with ideals $I_n$, and $^*$-ho\-mo\-mor\-phisms $\phi_n, \psi_n : A \rightarrow B_n $ such that 
\begin{equation} \label{eq:phipsiclose} \Vert \pi_{I_n} \circ \phi_n (a) - \pi_{I_n} \circ \psi_n(a) \Vert < \frac{1}{n}\quad\text{for all }a\in \mathcal{G}_n, \end{equation}
 but no $^*$-ho\-mo\-mor\-phism $\eta_n :A \rightarrow B_n$ satisfies both $\Vert \phi_n (a) - \eta_n(a) \Vert < \epsilon$ for all $a\in \mathcal{F}$ and $\pi_{I_n} \circ \psi_n = \pi_{I_n} \circ \eta_n $.
	
	Let $B \coloneqq \prod \limits_{n=1}^{\infty} B_n$, $I \coloneqq \prod \limits_{n=1}^{\infty} I_n$, and $J\coloneqq \bigoplus \limits_{n=1}^{\infty} B_n$. Define $^*$-ho\-mo\-mor\-phisms $\overline{\phi}\coloneqq(\phi_1,\phi_2,\dots),\overline\psi\coloneqq(\psi_1,\psi_2,\dots): A \rightarrow B$.

	By \eqref{eq:phipsiclose}, it follows that $\pi_{I+J}\circ\overline\phi = \pi_{I+J}\circ\overline\psi$.
Then by the general Chinese remainder theorem (Lemma \ref{lem:CRT}), there exists a $^*$-ho\-mo\-mor\-phism $\theta : A \rightarrow B/ (I\cap J)$ such that 
	\begin{equation}
\label{eq:thetadef2}
		\pi_{J} \circ \overline{\phi} = \pi_{J} \circ \theta\,\, \text{ and}\,\, \pi_{I} \circ \overline{\psi} = \pi_{I} \circ \theta 
	\end{equation}
For each $n\in\mathbb N$, define the $^*$-ho\-mo\-mor\-phism
	\begin{equation}
		\overline\alpha_n\coloneqq  (\psi_1, \psi_2, \ldots, \psi_{n-1}, \phi_{n}, \phi_{n+1}, \ldots):A \to B.
	\end{equation}
Then by the definition of $J$, we have $\pi_J \circ \overline\alpha_n = \pi_J \circ \overline\phi$.
Therefore by \eqref{eq:thetadef2}, for $x \in A$, 
\begin{equation} \begin{split}
\|\pi_{I\cap J}\circ\overline\alpha_n(x)-\theta(x)\| &= 
\|\pi_I \circ \overline\alpha_n(x) - \pi_I \circ \overline\psi(x)\| \\
&= \sup_{m\geq n} \|\pi_{I_m}\circ\phi_m(x)-\pi_{I_m}\circ\psi_m(x)\| \to 0. 
\end{split} \end{equation}
Since $A$ is $\ell$-closed, we deduce that $\theta$ lifts to a $^*$-ho\-mo\-mor\-phism $\eta=(\eta_1,\eta_2,\dots): A \rightarrow B$.
Then \eqref{eq:thetadef2} implies that $\pi_{I_n} \circ \psi_n = \pi_{I_n} \circ \eta_n $ and $\lim \limits_{n \rightarrow \infty }\Vert \phi_n (x)- \eta_n (x) \Vert =0$ for all $x\in A$. 
Hence, there is a $k$ such that
\begin{equation}\Vert \phi_k (a) -\eta_{k}(a) \Vert < \epsilon \end{equation}
for all $a\in \mathcal{F}$. This is a contradiction.

(ii)$\Rightarrow$(i). Suppose $\eta_n : A\rightarrow B/I$ is a sequence of liftable $^*$-ho\-mo\-mor\-phisms which converges pointwise to a $^*$-ho\-mo\-mor\-phism $\eta : A \rightarrow B/I$. Let $\mathcal{F}_n$ be an increasing sequence of finite sets whose union is dense in $A$. Choose $\delta_n >0$ and a finite set $\mathcal{G}_n$ such that they satisfy the conditions of (ii) with $\epsilon\coloneqq \frac{1}{2^n}$ and $\mathcal F\coloneqq \mathcal{F}_n$.
By passing to a subsequence, we may assume without loss of generality that 
\begin{equation}\Vert \eta_n (u)-\eta_{n+1} (u)\Vert < \delta_n \,\,\,\,\, \text{for all}\,\, u\in \mathcal{G}_n.\end{equation}
Let $\overline{\eta}_n : A\rightarrow B$ be a lift of $\eta_n$.
Then by the choice of $\mathcal G_1$ and $\delta_1$ from (ii) implies that there exists a $^*$-ho\-mo\-mor\-phism $\xi_2 : A\rightarrow B$ such that $\Vert \overline{\eta}_1 (v) - \xi_2(v) \Vert < \frac{1}{2}$ for all $v\in \mathcal{G}_1$ and $\pi_I \circ \overline{\eta}_2 = \pi_I \circ \xi_2$. 
Then we have $ \Vert \pi_I \circ \overline{\eta}_2(u) - \pi_I \circ \overline{\eta}_3 (u) \Vert =  \Vert \pi_1 \circ \xi_2 (u) - \pi_I \circ \overline{\eta}_3(u) \Vert < \delta_2 $ for all $u\in \mathcal{G}_2$. 
Using the choice of $\mathcal G_2$ and $\delta_2$ from (ii), we have a $^*$-ho\-mo\-mor\-phism $\xi_3 : A\rightarrow B$ such that $\Vert \xi_2(v) - \xi_3(v)  \Vert < \frac{1}{2^2}$ and $\pi_I \circ \overline{\eta}_3 = \pi_I \circ \xi_3$. Continuing the process and setting $\xi_1 = \overline{\eta}_1 $, we get a $(\xi_n :A \rightarrow B)$ such that $\Vert \xi_n (a) - \xi_{n+1}(a) \Vert < \frac{1}{2^n}$ for all $a\in F_n$ and $\eta_n = \pi_{I} \circ \xi_n$. Consequently, the sequence $(\xi_n (a))_{n=1}^\infty$ is Cauchy for each $a \in A$, so it converges to some $\xi(a) \in B$.
This defines a $^*$-ho\-mo\-mor\-phism $\xi: A\rightarrow B$, and for $a\in A$,
\begin{equation} \begin{split}
\pi_I \circ \xi(a) = \lim_n \pi_I \circ \xi_n(a) = \lim_n \eta_n(a) = \eta(a).
\end{split} \end{equation}
Therefore we obtain a lift of $\eta$, and this shows that $A$ is $\ell$-closed.
\end{proof}

Note that condition (iii) of Theorem~\ref{thm:Main} strengthens condition (ii) in Theorem~\ref{thm:MainClosed}, by replacing $\psi:A \to B$ with a map $A \to B/I$ which is (a priori) not liftable.
This gives a quick proof of Corollary~\ref{cor:openimpliesclosed} in the separable case.

Theorem \ref{thm:MainClosed} may be reformulated as follows.
\begin{thm}\label{thm:MainClosedReformulation}Let $A$ be a separable $C^*$-algebra and $S$ a generating set of $A$. Then the following are equivalent:
\begin{enumerate}
\item $A$ is $\ell$-closed.
\item For any $\epsilon > 0$ and finite set $\mathcal{F} \subset S$, there is a $\delta >0$ and a finite set $\mathcal{G} \subset S$ such that whenever $B$ is a $C^*$-algebra, $I$ is a closed ideal of $B$, $\psi$ and $\phi$ are $^*$-ho\-mo\-mor\-phisms from $A$ to $B$ with $\Vert \pi_{I} \circ \phi (u) - \pi_{I} \circ \psi(u) \Vert < \delta$ for all $u\in \mathcal{G}$, then there exists a $^*$-ho\-mo\-mor\-phism $\eta: A \rightarrow B$ such that $\Vert \phi(v) - \eta(v) \Vert < \epsilon$ for all $v\in \mathcal{F}$ and $\pi_{I} \circ \psi =\pi_{I} \circ \eta$. 
\end{enumerate}
\end{thm}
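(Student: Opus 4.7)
The plan is to deduce this reformulation directly from Theorem~\ref{thm:MainClosed} by showing that condition (ii) here is equivalent to the formally stronger condition (ii) of Theorem~\ref{thm:MainClosed}, in which the finite sets range over all of $A$. The implication (i)$\Rightarrow$(ii) is immediate, since Theorem~\ref{thm:MainClosed} delivers that condition for arbitrary finite $\mathcal F,\mathcal G\subseteq A$, and $S\subseteq A$.

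For (ii)$\Rightarrow$(i), I would verify condition (ii) of Theorem~\ref{thm:MainClosed}, whence $\ell$-closedness follows. Given $\epsilon>0$ and a finite set $\mathcal F\subseteq A$, I would use that $S$ generates $A$ as a $C^*$-algebra to approximate each $v\in\mathcal F$ to within $\epsilon/3$ by a $^*$-polynomial $p_v$ in finitely many elements of $S$; let $S_{\mathcal F}\subseteq S$ collect all elements of $S$ appearing in any $p_v$. The key point is that for each fixed $^*$-polynomial $p_v$, the evaluation $(x_s)_{s\in S_{\mathcal F}}\mapsto p_v((x_s))$ is uniformly continuous on the product of closed balls of radius $\|s\|$, with modulus of continuity depending only on $p_v$ and these norm bounds---in particular, independent of the ambient $C^*$-algebra. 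Hence there exists $\epsilon'>0$ such that whenever $\phi,\eta: A \to B$ are $^*$-homomorphisms with $\|\phi(s)-\eta(s)\|<\epsilon'$ for all $s\in S_{\mathcal F}$, one has $\|\phi(p_v)-\eta(p_v)\|<\epsilon/3$ for every $v\in\mathcal F$.

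Applying the hypothesis (ii) with finite set $S_{\mathcal F}$ and tolerance $\epsilon'$ then produces $\delta>0$ and $\mathcal G\subseteq S$. Whenever $^*$-homomorphisms $\phi,\psi: A\to B$ satisfy $\|\pi_I\phi(u)-\pi_I\psi(u)\|<\delta$ on $\mathcal G$, the hypothesis furnishes $\eta: A \to B$ with $\pi_I\eta=\pi_I\psi$ and $\|\phi(s)-\eta(s)\|<\epsilon'$ on $S_{\mathcal F}$; a three-term triangle inequality $\|\phi(v)-\eta(v)\|\le\|\phi(v-p_v)\|+\|\phi(p_v)-\eta(p_v)\|+\|\eta(p_v-v)\|$, together with contractivity of $^*$-homomorphisms, then gives $\|\phi(v)-\eta(v)\|<\epsilon$ for all $v\in\mathcal F$. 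The only step requiring care is ensuring that the modulus of continuity of $^*$-polynomial evaluation can be chosen independently of $B$ and of $\phi,\eta$, so that $\delta$ and $\mathcal G$ depend only on $\mathcal F$ and $\epsilon$; since each monomial is Lipschitz on bounded sets with constants determined by its degree and the relevant norm bounds, this is routine bookkeeping rather than a genuine obstacle.
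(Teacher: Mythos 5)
Your direction (ii)$\Rightarrow$(i) is fine: verifying condition (ii) of Theorem~\ref{thm:MainClosed} via $^*$-polynomial approximation of $\mathcal F$ in the generators, using that the modulus of continuity of polynomial evaluation on norm-bounded tuples is independent of the ambient $C^*$-algebra (since $^*$-homomorphisms are contractive, $\|\phi(s)\|\le\|s\|$), is exactly the routine transfer argument this reformulation calls for; the paper states the theorem as a reformulation of Theorem~\ref{thm:MainClosed} without writing out a proof, so this is the intended kind of bookkeeping.

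However, your claim that (i)$\Rightarrow$(ii) is ``immediate'' has a genuine gap. Condition (ii) of the reformulation does not merely restrict the input set $\mathcal F$ to $S$; it also demands that the output set $\mathcal G$ be chosen inside $S$. Theorem~\ref{thm:MainClosed} only produces some finite $\mathcal G_0\subset A$, and there is no reason for $\mathcal G_0$ to lie in $S$, so you cannot simply quote it. The fix is the same device you used in the other direction, now applied on the hypothesis side: approximate each $g\in\mathcal G_0$ to within $\delta_0/3$ by a $^*$-polynomial $p_g$ in finitely many elements of $S$, let $\mathcal G\subset S$ collect those generators, and choose $\delta>0$ (using the same algebra-independent uniform continuity of polynomial evaluation, applied to the $^*$-homomorphisms $\pi_I\circ\phi$ and $\pi_I\circ\psi$) so that $\|\pi_I\phi(u)-\pi_I\psi(u)\|<\delta$ for all $u\in\mathcal G$ forces $\|\pi_I\phi(p_g)-\pi_I\psi(p_g)\|<\delta_0/3$, whence by the triangle inequality $\|\pi_I\phi(g)-\pi_I\psi(g)\|<\delta_0$ for all $g\in\mathcal G_0$; then the conclusion of Theorem~\ref{thm:MainClosed}(ii) applies and yields the required $\eta$. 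With this added paragraph your argument is complete; as written, the forward direction is not justified.
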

In \cite[Example 6.4]{Bla16}, Blackadar asks whether $C^*(\mathbb{F}_\infty)$, the universal $C^*$-algebra generated by a sequence of unitaries, is $\ell$-closed. We now show that it is.
\begin{ex} $C^*(\mathbb{F}_\infty)$ is $\ell$-closed. To see this, consider $\epsilon >0$, a finite set $\mathcal{F} \subset \{u_1, u_2, \ldots \}$, an ideal $I$ of $B$, and $^*$-ho\-mo\-mor\-phisms $\phi, \psi : C^*(\mathbb{F}_\infty) \rightarrow B$. Without loss of generality, we may assume $F=\{u_1, u_2, \ldots, u_n\}$ for some $n$. $C^*(F) \cong C^* (\mathbb{F}_n)$ is semiprojective ((this is well-known; see \cite[Corollary 2.22 and Proposition 2.31]{Bla85} for example) and so $\ell$-closed by \cite[Corollary 6.2]{Bla16}. Choose $\delta >0$ and a finite set $\mathcal{G}\subset \mathcal{F}$ as in Theorem \ref{thm:MainClosedReformulation} (applied to $C^*(F)$). Then $\Vert \pi_{I} \circ \phi (u)-\pi_{I} \circ \psi (u)\Vert < \delta$ for all $u\in \mathcal{G}$ implies there exists a $^*$-ho\-mo\-mor\-phism $\xi : C^* (\mathbb{F}_n) \rightarrow B$ such that $\Vert \phi(v) - \xi (v) \Vert < \epsilon$ for all $v\in \mathcal{F}$ and $\pi_{I}\circ \xi = \pi_{I} \circ \psi \vert_{C^*(\mathbb{F}_n)}$. $\eta : C^*(\mathbb{F}_\infty) \rightarrow B$ defined by
\begin{equation}
\eta(u_m) \coloneqq  \begin{cases}
\xi(u_m), \quad & m\leq n ;\\
\psi (u_m), \quad & m>n
\end{cases}
\end{equation}
is a $^*$-ho\-mo\-mor\-phism satisfying  $\Vert \phi(v) - \eta (v) \Vert < \epsilon$ for all $v\in \mathcal{F}$ and  $\pi_{I}\circ \xi = \pi_{I} \circ \eta$, as required.
\end{ex}
\begin{rem}
Using a result of Salinas, we can see that $\ell$-closedness in often very restrictive, just by looking at the case $B\coloneqq \mathcal B(\ell^2)$ and $I\coloneqq \mathcal K$.
Here, $\Hom(A,B/I)$ corresponds to extensions of $A$ by the compact operators, with $\Hom(A,B,I)$ corresponding to the subset of trivial extensions.
Therefore, if $A$ is $\ell$-closed then the set of trivial extensions is closed in the set of all extensions (using the corresponding topology).

When $A$ is a quasidiagonal nuclear $C^*$-algebra, then the closure of the set of trivial extensions is the set of all quasidiagonal extensions by \cite[Theorem 2.9]{Salinas} (the topology is described in \cite[Remark 2.8]{Salinas}, and can be seen to agree with the point-topology in $\Hom(A,B/I)$).
Therefore if $A$ is additionally $\ell$-closed, then all quasidiagonal extensions must be trivial.

From this we may conclude that any infinite dimensional UHF algebra is not $\ell$-closed, since for these $C^*$-algebras, all extensions are quasidiagonal, but they are not all trivial (see \cite[Remark 2.13]{Salinas}).
\end{rem}
\section{Commutative unital $\ell$-open $C^*$-algebras}
In this section, we show that commutative unital separable $\ell$-open $C^*$-algebras coincide with commutative unital separable semiprojective $C^*$-algebras.
We begin with the following which may be of independent interest.

\begin{prop}
\label{prop:lopenWeaklysemiproj}
Let $A$ be an $\ell$-open $C^*$-algebra and $\psi : A\rightarrow B$ a weakly semiprojective $^*$-ho\-mo\-mor\-phism. Then $\psi$ is a semiprojective $^*$-ho\-mo\-mor\-phism.
\end{prop}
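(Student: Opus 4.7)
The plan is to combine the approximate lifts furnished by weak semiprojectivity with the uniform openness property from Theorem~\ref{thm:lopen} to upgrade those approximate lifts to exact ones.

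First I fix the data required to test semiprojectivity of $\psi$: a $C^*$-algebra $B$, an increasing sequence of ideals $I_1 \triangleleft I_2 \triangleleft \cdots$ in $B$ with $I \coloneqq \overline{\bigcup_n I_n}$, and a $^*$-homomorphism $\varphi \colon B\to$ \ldots\ more precisely a $^*$-homomorphism $\varphi \colon C \to B/I$. The goal is to produce some $n$ and a $^*$-homomorphism $\eta \colon A \to B/I_n$ with $\pi_I \circ \eta = \varphi \circ \psi$, where $\pi_I$ denotes the quotient $B/I_n \to B/I$. Since $A$ is $\ell$-open, Theorem~\ref{thm:lopen} applies, and I invoke it with any convenient choice of finite set and tolerance (for instance $\mathcal{F} = \emptyset$ and $\epsilon = 1$) to obtain a finite set $\mathcal{G} \subset A$ and $\delta > 0$ that witness the uniform openness property.

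Next, I apply weak semiprojectivity of $\psi$ to the data $(B, (I_n)_n, \varphi)$ together with the finite set $\mathcal{G}$ and tolerance $\delta$, producing some index $n$ and a $^*$-homomorphism $\tilde\psi \colon A \to B/I_n$ satisfying $\|\pi_I \circ \tilde\psi(u) - \varphi \circ \psi(u)\| < \delta$ for every $u \in \mathcal{G}$. Now I apply Theorem~\ref{thm:lopen} a second time, this time taking $B/I_n$ as the ambient algebra and $I/I_n$ as the ideal, so that the quotient is again $B/I$. The map $\pi_I \circ \tilde\psi \colon A \to B/I$ visibly lifts (to $\tilde\psi$) and is $\delta$-close on $\mathcal{G}$ to $\varphi \circ \psi$, so Theorem~\ref{thm:lopen} produces an exact lift $\eta \colon A \to B/I_n$ of $\varphi \circ \psi$, as needed.

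I do not anticipate any serious obstacle: the argument is essentially a two-step chase. The only points requiring care are notational — the symbol $\pi_I$ denotes first the quotient $B \to B/I$ implicit in the statement of weak semiprojectivity, and then the further quotient $B/I_n \to B/I$ used in the second application of Theorem~\ref{thm:lopen} — and the crucial feature being exploited is the uniformity built into Theorem~\ref{thm:lopen}: the pair $(\mathcal{G}, \delta)$ must be chosen \emph{before} $n$ is produced by weak semiprojectivity, and its independence from the ambient $C^*$-algebra and ideal is precisely what allows it to be fed into the second, a priori different, application of the theorem.
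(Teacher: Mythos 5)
Your proposal is correct and follows essentially the same route as the paper's proof: fix $(\mathcal G,\delta)$ from Theorem~\ref{thm:lopen} in advance, use weak semiprojectivity of $\psi$ to get an approximate lift into $B/I_n$ on $\mathcal G$ within $\delta$, and then apply the uniform openness of Theorem~\ref{thm:lopen} to the quotient $B/I_n \to B/I$ to replace it by an exact lift of $\varphi\circ\psi$. Apart from the (self-acknowledged) notational clash between the codomain of $\psi$ and the test algebra, nothing is missing.
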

\begin{proof}
Fix $\epsilon>0$ and a finite set $\mathcal{F}$ in $A$, and let $\delta>0$ and $\mathcal{G}\subset A$ be given by Theorem \ref{thm:lopen}.
Given any $^*$-ho\-mo\-mor\-phism  $\varphi : B \rightarrow C/\overline{\bigcup{}_{n} \ J_{n}}$ with $J_1 \triangleleft J_2 \triangleleft \cdots \triangleleft C$ an increasing sequence of closed ideals of a $C^*$-algebra $C$, by weak semiprojectivity we can find some $n$ and a $^*$-ho\-mo\-mor\-phism $\phi : A \rightarrow C/J_n$ such that  
\begin{equation} \Vert \varphi \circ \psi (u) - \pi \circ \phi (u) \Vert < \delta \end{equation}
for all $u\in \mathcal{G}$. 
It follows from Theorem \ref{thm:lopen} that there exists a $^*$-ho\-mo\-mor\-phism $\rho : A\rightarrow C/J_n$ such that $\varphi \circ \psi = \pi \circ \rho$. Hence $ \psi$ is a semiprojective $^*$-ho\-mo\-mor\-phism.
\end{proof}
\begin{lem}[\cite{CD10}, Proposition 3.1]\label{lem:S1copy}
	Let $X$ be a compact, connected, and locally connected metric space, of covering dimension $>1$. Then $X$ contains a topological copy of the circle $S^1$.	
\end{lem}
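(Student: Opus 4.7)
The natural strategy is to prove the contrapositive: if $X$ is a Peano continuum (compact, connected, locally connected, metric) that contains no topological copy of $S^1$, then $\dim X \leq 1$. The hypothesis ``no simple closed curve'' is precisely the classical defining property of a \emph{dendrite}, so the claim reduces to the well-known continuum-theoretic fact that every dendrite has covering dimension at most $1$.

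The key structural input one needs is that a dendrite is uniquely arcwise connected: between any two distinct points there is a unique arc, and the removal of any non-endpoint of that arc disconnects the whole dendrite. Using this together with hereditary unicoherence, one shows that every point $p \in X$ admits a neighbourhood base of open sets with \emph{finite} boundary. Concretely, given a neighbourhood $U$ of $p$, the compactness of $X \setminus U$ together with unique arcwise connectedness lets one cut $U$ along finitely many small arcs emanating from $p$, producing a smaller neighbourhood of $p$ whose boundary is a finite set of branch points.

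Once such a finite-boundary neighbourhood base is in hand, the small inductive dimension satisfies $\mathrm{ind}(X) \leq 1$, since finite (hence $0$-dimensional) boundaries witness separations by a $0$-dimensional set. Because $X$ is a separable metric space, the Urysohn--Menger theorem identifies small inductive dimension with covering dimension, yielding $\dim X \leq 1$ and contradicting the hypothesis $\dim X > 1$.

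The main obstacle is the finite-boundary assertion for dendrites, which carries the genuine continuum-theoretic content and rests on the order theory of points in a dendrite (branch points, endpoints, order of a point, etc.). A self-contained derivation would require developing this order theory in the manner of Nadler's \emph{Continuum Theory} or Whyburn's \emph{Analytic Topology}; in the paper itself I would simply cite \cite{CD10}, Proposition~3.1, rather than reproduce this classical machinery.
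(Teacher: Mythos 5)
The paper gives no proof of this lemma at all: it is quoted verbatim from \cite{CD10}, Proposition~3.1, which is exactly what you propose to do in the end. Your sketch of the underlying argument --- no simple closed curve plus Peano continuum means $X$ is a dendrite, dendrites are rim-finite (every point has a neighbourhood base with finite boundary), hence $\mathrm{ind}(X)\leq 1$, which coincides with covering dimension for separable metric spaces --- is the standard continuum-theoretic proof and is correct in outline, with the genuinely technical step (rim-finiteness of dendrites) properly attributed to the classical literature.
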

Recall that metrizable space $X$ is an absolute neighbourhood retract (ANR) if, for any metrizable space $Y$ and closed subspace $Z$ of $Y$, there exists a neighbourhood $V$ of $Z$ such that any continuous map $\eta: Z \to X$ extends to a continuous map $\theta: V \to X$.
\begin{prop}Let $X$ be a compact ANR. If $C(X)$ satisfies the condition of the homotopy lifting theorem, then $dim(X)\le 1$.
\end{prop}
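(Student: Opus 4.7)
The plan is to prove the contrapositive: if $\dim(X)>1$, then HLT fails for $C(X)$. I will construct a point-norm continuous path in $\Hom(C(X),\mathcal{Q}(H))$, where $\mathcal{Q}(H)=\mathcal{B}(H)/\mathcal{K}(H)$ is the Calkin algebra, from a liftable $^*$-ho\-mo\-mor\-phism to a non-liftable one; applying HLT to the reversed path will then yield the desired contradiction.

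A compact ANR has only finitely many components, each a compact ANR, so one may assume $X$ is connected. Since $X$ is locally contractible and $\dim(X)>1$, some point $p\in X$ has local dimension $>1$ and admits an open contractible neighborhood $U$ with $\dim(U)>1$. Invoking the local structure of ANRs (every point has a neighborhood basis of subsets that are themselves compact ANRs, hence Peano continua), one can locate inside $U$ a compact, connected, locally connected subspace of dimension $>1$. Lemma~\ref{lem:S1copy} then produces an embedded topological circle $S^1\subset U$, and since $U$ is contractible, the inclusion $\iota:S^1\hookrightarrow X$ is null-homotopic via a continuous map $H:S^1\times[0,1]\to X$ with $H_0=\iota$ and $H_1\equiv x_0$.

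Next I construct a non-liftable $^*$-ho\-mo\-mor\-phism $\tilde\varphi:C(X)\to\mathcal{Q}(H)$. Let $V$ be the unilateral shift on $\ell^2$, Fredholm of index $-1$, so that $\pi(V)\in\mathcal{Q}(H)$ is a unitary. Define $\varphi:C(S^1)\to\mathcal{Q}(H)$ by $\varphi(z):=\pi(V)$ and set $\tilde\varphi:=\varphi\circ\iota^*$. By Tietze's extension theorem composed with the retraction $\mathbb C\to\overline{\mathbb D}$, the map $z\mapsto z$ on $S^1$ extends to a continuous $u:X\to\overline{\mathbb D}$ with $u|_{S^1}(z)=z$. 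If $\eta:C(X)\to\mathcal{B}(H)$ were a lift of $\tilde\varphi$, then $\eta(u)$ would be normal (image of a normal element under a $^*$-ho\-mo\-mor\-phism from a commutative $C^*$-algebra), with $\sigma(\eta(u))\subseteq u(X)\subseteq\overline{\mathbb D}$ and $\pi(\eta(u))=\pi(V)$. Then $\eta(u)-V\in\mathcal{K}(H)$, so $\eta(u)$ is Fredholm of index $-1$, contradicting the fact that a normal Fredholm operator must have index $0$.

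Finally, set $\varphi_t:=\varphi\circ H_t^*:C(X)\to\mathcal{Q}(H)$, where $H_t:S^1\to X$ is the $t$-slice of $H$. This path is point-norm continuous in $t$, $\varphi_0=\tilde\varphi$ is not liftable, while $\varphi_1(f)=f(x_0)\cdot 1$ lifts trivially through the scalar inclusion $\mathbb C\hookrightarrow\mathcal{B}(H)$. Applying HLT to the reversed path $(\varphi_{1-s})_{s\in[0,1]}$, which starts at the liftable $\varphi_1$, would produce a lift of $\varphi_0=\tilde\varphi$, contradicting the preceding step. The main obstacle is the second paragraph — producing a \emph{null-homotopic} embedded $S^1$ in $X$ — which requires extracting a compact, connected, locally connected subspace of dimension $>1$ from inside a contractible open neighborhood of $p$ so that Lemma~\ref{lem:S1copy} applies; the other steps are essentially formal consequences of BDF index obstructions and the homotopy lifting hypothesis.
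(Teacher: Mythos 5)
Your argument is correct in outline but takes a genuinely different route from the paper. The paper (following S{\o}rensen--Thiel) fixes a point $x_0$ of local dimension $\geq 2$, embeds circles in a decreasing sequence of closed neighbourhoods, assembles them into the compact space $Y$ of shrinking circles, and uses weak semiprojectivity of $C(Y)$ together with the fact that point-norm close unital maps $C(X)\to C(Y)$ are homotopic (this is where the ANR hypothesis enters, via Hu's theorem) to apply the Homotopy Lifting Theorem to the quotient $B/J_k \to B/J\cong C(Y)$ with $B=(\bigoplus_k\mathcal T)^\dagger$; the contradiction is the Fredholm-index obstruction to lifting the circle unitary to a normal element of the Toeplitz algebra. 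You instead use the ANR hypothesis only through local contractibility: you produce a single embedded circle lying in a neighbourhood whose inclusion into $X$ is null-homotopic, pull back the Calkin extension along restriction to that circle (your index argument correctly shows the resulting map $C(X)\to\mathcal{B}(H)/\mathcal{K}$ is not liftable, since a lift would give a normal compact perturbation of the shift), and apply Theorem~\ref{thm:HomotopyLifting} along the reversed null-homotopy, whose starting point is a liftable point evaluation. This buys a shorter argument that avoids the space $Y$, the weak semiprojectivity input from \cite{ST12}, the Toeplitz direct-sum algebra, and the close-maps-are-homotopic theorem; both proofs bottom out in the same normal-versus-Fredholm-index obstruction.

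The step you flag is indeed the only place needing repair, and your stated justification overclaims twice. First, you neither need nor are entitled to an open neighbourhood $U$ that is contractible in itself: local contractibility of an ANR gives a neighbourhood $U$ of $p$ whose inclusion $U\hookrightarrow X$ is null-homotopic, and restricting that null-homotopy to the circle is all you use. Second, the assertion that every point of a compact ANR has a neighbourhood basis of compact ANR neighbourhoods is not a standard fact and should not be invoked; but you only need a compact, connected, locally connected neighbourhood of $p$ of dimension $\geq 2$ inside $U$, so that Lemma~\ref{lem:S1copy} applies. This is classical continuum theory rather than ANR theory: the component of $p$ is a Peano continuum (a compact connected ANR is locally connected), and Peano continua have a basis of open connected sets with property S, whose closures are again Peano continua; any such closed neighbourhood contained in $U$ has dimension $\geq 2$ because $p$ was chosen of local dimension $\geq 2$. (The paper's own proof implicitly needs the same fact in order to apply Lemma~\ref{lem:S1copy} to its closed neighbourhoods $D_k$.) With that substitution your proof is complete.
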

\begin{proof}
We prove that the result along the lines of S{\o}rensen and Thiel's proof of \cite[Proposition 3.1]{ST12}. Suppose by contradiction that $\dim(X)\ge 2$. Since $X$ is compact, we have that $\mathrm{locdim} (X)=\dim (X) \ge 2$, which implies that there is an $x_0 \in X$ such that $\dim (D) \ge 2$ for every closed neighbourhood $D$ of $x_0$ (see \cite{Nagami} for details on $\mathrm{locdim}(X)$). Let $D_1,D_2,\dots$ be a decreasing sequence of closed neighbourhoods of $x_0$ with $\dim(D_k)\ge 2$ for all $k$. Using Lemma \ref{lem:S1copy}, there exists a topological embedding $\psi_k : S^1 \xhookrightarrow{} D_k  \subset X$ for each $k$.
Let 
\begin{equation}Y\coloneqq (0,0) \cup \bigcup _{k\ge 1} S((\frac{1}{2^k},0), \frac{1}{4\cdot 2^k}) \subset \mathbb{R}^2,\end{equation} 
where $S(x, r)$ is the circle centered at $x$ of radius $r$. Then $C(Y)$ is weakly semiprojective (\cite{ST12}]). Define $\psi : Y \rightarrow X$ to send $(0,0)$ to $x_0$ and to be $\psi_k$ on the circle $S((\frac{1}{2^k},0), \frac{1}{4. 2^k})$.
Then $\psi$ induces a $^*$-ho\-mo\-mor\-phism $\psi^* : C(X)\rightarrow C(Y)$, which is weakly semiprojective since $C(Y)$ is.

Let $ \mathcal{T}$ be the Toeplitz algebra and let $\mathcal{K}$ be the ideal of compact operators.
Writing $A^\dag$ for the unitization of $A$, set
\begin{equation}\begin{split}
B & \coloneqq (\bigoplus_{k\ge 1} \mathcal{T})^\dag \\
&= \{ (t_1, t_2, \ldots ,) \in \prod_{k\ge 1}\mathcal{T}:(t_k)_k \,\, \text{converges to a scalar multiple of} \, 1_\mathcal{T}\}
\end{split}\end{equation}
and $J_k \coloneqq \underbrace{\mathcal{K}\oplus \mathcal{K}\oplus \cdots \mathcal{K}}_{k \,\,times} \oplus 0 \oplus 0 \cdots $. Then $J_k \subset J_{k+1}$, $J= \overline{\bigcup_k J_k} = \bigoplus_{k\ge 1} \mathcal{K}
$ ,
\begin{equation} B/J_k = \underbrace{C(S^1)\oplus C(S^1)\oplus \cdots \oplus C(S^1)}_{k \,\,times} 	\oplus (\bigoplus_{l \ge k+1} \mathcal{T})^\dag,\end{equation}
and $B/J =(\bigoplus_{k\ge 1}(C(S^1)))^\dag \cong C(Y)$.

	

Since $X$ is a compact ANR, we can find $\epsilon >0$ and finite set $\mathcal{F}$ in $C(X)$ such that any unital $^*$-ho\-mo\-mor\-phisms $\varphi_1 , \varphi_2 : C(X) \rightarrow C(Y)$ with $\Vert \varphi_1 (f)-\varphi_2 (f) \Vert < \epsilon$ for all $f\in \mathcal{F}$ are homotopic \cite[Theorem IV.1.1]{Hu65}. The weakly semiprojectivity of $\psi^*$ implies there exists a $^*$-ho\-mo\-mor\-phism $\theta : C(X) \rightarrow B/J_k$ such that $\Vert \psi^* (f) -\pi  \theta(f) \Vert < \epsilon$ for all $f\in \mathcal{F}$. It follows that $\psi^*$ is homotopic to $\pi\theta$. We apply the homotopy lifting theorem to say $\psi^*$ lifts to some $\overline{\psi}: C(X) \rightarrow B/J_k$. 
\tikzset{every picture/.style={line width=0.75pt}} 

\begin{tikzpicture}[x=0.75pt,y=0.75pt,yscale=-1,xscale=1]

\draw    (127,178) -- (149,178) -- (186,178) ;
\draw [shift={(188,178)}, rotate = 180] [color={rgb, 255:red, 0; green, 0; blue, 0 }  ][line width=0.75]    (10.93,-3.29) .. controls (6.95,-1.4) and (3.31,-0.3) .. (0,0) .. controls (3.31,0.3) and (6.95,1.4) .. (10.93,3.29)   ;
\draw    (228,178) -- (293,178.97) ;
\draw [shift={(295,179)}, rotate = 180.86] [color={rgb, 255:red, 0; green, 0; blue, 0 }  ][line width=0.75]    (10.93,-3.29) .. controls (6.95,-1.4) and (3.31,-0.3) .. (0,0) .. controls (3.31,0.3) and (6.95,1.4) .. (10.93,3.29)   ;
\draw    (308,120) -- (308.96,167) ;
\draw [shift={(309,169)}, rotate = 268.83] [color={rgb, 255:red, 0; green, 0; blue, 0 }  ][line width=0.75]    (10.93,-3.29) .. controls (6.95,-1.4) and (3.31,-0.3) .. (0,0) .. controls (3.31,0.3) and (6.95,1.4) .. (10.93,3.29)   ;
\draw    (332,180) -- (404,181.95) ;
\draw [shift={(406,182)}, rotate = 181.55] [color={rgb, 255:red, 0; green, 0; blue, 0 }  ][line width=0.75]    (10.93,-3.29) .. controls (6.95,-1.4) and (3.31,-0.3) .. (0,0) .. controls (3.31,0.3) and (6.95,1.4) .. (10.93,3.29)   ;
\draw    (338,109) -- (410,110.95) ;
\draw [shift={(412,111)}, rotate = 181.55] [color={rgb, 255:red, 0; green, 0; blue, 0 }  ][line width=0.75]    (10.93,-3.29) .. controls (6.95,-1.4) and (3.31,-0.3) .. (0,0) .. controls (3.31,0.3) and (6.95,1.4) .. (10.93,3.29)   ;
\draw    (428.11,119.02) -- (428,168) ;
\draw [shift={(428,170)}, rotate = 270.12] [color={rgb, 255:red, 0; green, 0; blue, 0 }  ][line width=0.75]    (10.93,-3.29) .. controls (6.95,-1.4) and (3.31,-0.3) .. (0,0) .. controls (3.31,0.3) and (6.95,1.4) .. (10.93,3.29)   ;
\draw    (104,191) .. controls (129.87,220.85) and (318.21,257.65) .. (420.57,197.93) ;
\draw [shift={(422.11,197.02)}, rotate = 149.12] [color={rgb, 255:red, 0; green, 0; blue, 0 }  ][line width=0.75]    (10.93,-3.29) .. controls (6.95,-1.4) and (3.31,-0.3) .. (0,0) .. controls (3.31,0.3) and (6.95,1.4) .. (10.93,3.29)   ;
\draw  [dash pattern={on 4.5pt off 4.5pt}]  (117,163.6) -- (285.08,113.17) ;
\draw [shift={(287,112.6)}, rotate = 163.3] [color={rgb, 255:red, 0; green, 0; blue, 0 }  ][line width=0.75]    (10.93,-3.29) .. controls (6.95,-1.4) and (3.31,-0.3) .. (0,0) .. controls (3.31,0.3) and (6.95,1.4) .. (10.93,3.29)   ;

\draw (87,167.4) node [anchor=north west][inner sep=0.75pt]    {$C( X)$};
\draw (190,169.4) node [anchor=north west][inner sep=0.75pt]    {$C( Y)$};
\draw (297,170.4) node [anchor=north west][inner sep=0.75pt]    {$B/J$};
\draw (421,101.4) node [anchor=north west][inner sep=0.75pt]    {$\mathcal{T}$};
\draw (293,99.4) node [anchor=north west][inner sep=0.75pt]    {$B/J_{k}$};
\draw (406,171.4) node [anchor=north west][inner sep=0.75pt]    {$C\left( S^{1}\right)$};
\draw (244,165.4) node [anchor=north west][inner sep=0.75pt]    {$\cong $};
\draw (145,180.4) node [anchor=north west][inner sep=0.75pt]    {$\psi {^{*}}{}$};
\draw (343,181.4) node [anchor=north west][inner sep=0.75pt]    {$\rho _{k}{}_{+}{}_{1}$};
\draw (352,97.4) node [anchor=north west][inner sep=0.75pt]    {$\sigma _{k}{}_{+}{}_{1}$};
\draw (268,231.4) node [anchor=north west][inner sep=0.75pt]    {$\psi _{k+1}^{*}$};
\draw (180,120.4) node [anchor=north west][inner sep=0.75pt]    {$\overline{\psi }$};

\end{tikzpicture}

Let $\sigma_{k+1} : B/J_k \rightarrow \mathcal{T}$ be the projection of $B/J_k$ onto the (k+1)-th coordinate and $\rho_{k+1}: B/J \rightarrow C(S^1)$ be the projection of $B/J$ onto the (k+1)-th coordinate. Note that $\rho_{k+1} \circ \psi^* : C(X)\rightarrow C(S^1)$ coincide with the $^*$-ho\-mo\-mor\-phism induced by $\psi_{k+1}: S^1 \xhookrightarrow{} D_{k+1} \subset X$ and it is surjective since $\psi_{k+1}$ is an inclusion.
The generating unitary of $C(S^1)$ lifts to a normal element in $C(X)$ under $\psi^{*}_{k+1}$, but it does not lift to a normal element in $\mathcal{T}$, which is a contradiction. Hence, $\dim(X) \leq 1$.

\end{proof}
\begin{thm}
Let $X$ be a compact metric space. Then the following are equivalent
\begin{enumerate}
	\item $C(X)$ is a semiprojective $C^*$-algebra.
	\item $C(X)$ is an $\ell$-open $C^*$-algebra.
	\item $X$ is an ANR and $\dim(X)\leq 1$.
\end{enumerate}
\end{thm}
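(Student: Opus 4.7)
The plan is to close the cycle (iii)$\Rightarrow$(i)$\Rightarrow$(ii)$\Rightarrow$(iii). The implication (i)$\Rightarrow$(ii) is already recorded in the paper as the Example citing \cite[Corollary~6.2]{Bla16}. For (iii)$\Rightarrow$(i), I would cite the characterization of semiprojectivity for commutative separable unital $C^*$-algebras due to Sørensen and Thiel \cite{ST12} (together with Chigogidze--Dranishnikov \cite{CD10}): a compact metric space $X$ of dimension $\leq 1$ is an ANR exactly when $C(X)$ is semiprojective.

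The main content lies in (ii)$\Rightarrow$(iii). Assuming $C(X)$ is $\ell$-open, I would proceed by showing first that $X$ is an ANR, and then concluding $\dim X \leq 1$. The latter step is immediate once ANR is in hand: Theorem~\ref{thm:Main} tells us $\ell$-openness implies the Homotopy Lifting Theorem for $C(X)$, and the preceding Proposition then forces $\dim X \leq 1$.

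For the ANR step, my strategy is to exploit Proposition~\ref{prop:lopenWeaklysemiproj}, which says $\ell$-openness upgrades any weakly semiprojective $^*$-homomorphism to a semiprojective one. Applied to $\id_{C(X)}$, it suffices to show that $C(X)$ is weakly semiprojective: then $C(X)$ is fully semiprojective, and the Sørensen--Thiel characterization gives that $X$ is an ANR. To establish weak semiprojectivity, I would use the uniform control in Theorem~\ref{thm:lopen} together with $\ell$-closedness (which $\ell$-open implies by Corollary~\ref{cor:openimpliesclosed}) and its reformulation in Theorem~\ref{thm:MainClosed}. Concretely, given an increasing chain $J_1 \triangleleft J_2 \triangleleft \cdots \triangleleft C$ with $J = \overline{\bigcup_n J_n}$ and $\phi: C(X) \to C/J$, the goal is to approximate $\phi$ on any prescribed finite set within any $\epsilon$ by a $^*$-homomorphism factoring through $C/J_n$ for sufficiently large $n$, via a Cauchy-type iterative construction analogous to the (ii)$\Rightarrow$(i) argument in Theorem~\ref{thm:MainClosed}.

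The main obstacle is precisely this weak semiprojectivity implication. $\ell$-openness speaks to openness of the liftable set inside $\Hom(A,B/I)$, while weak semiprojectivity demands approximate lifts through an intermediate quotient in a chain---a distinct lifting problem. Bridging the two requires both the uniform structure on the system $\{\Hom(C(X),B)\}$ and the iterative correction available from $\ell$-closedness via Theorem~\ref{thm:MainClosed}, and likely uses some input specific to the commutative setting, since the implication $\ell$-open $\Rightarrow$ semiprojective is not expected to hold in full generality.
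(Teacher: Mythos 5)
Your cycle (iii)$\Rightarrow$(i)$\Rightarrow$(ii)$\Rightarrow$(iii) matches the paper's skeleton for two of the three arrows, and your reduction of $\dim X\leq 1$ to the ANR case via Theorem~\ref{thm:Main} and the preceding proposition is exactly what the paper does. The genuine gap is in your ANR step: you want to deduce that $C(X)$ is weakly semiprojective from $\ell$-openness (plus $\ell$-closedness), and then invoke Proposition~\ref{prop:lopenWeaklysemiproj}, but nothing in the $\ell$-open/$\ell$-closed machinery produces the required approximate homomorphisms in the first place. Theorem~\ref{thm:lopen}, Corollary~\ref{cor:openimpliesclosed} and Theorem~\ref{thm:MainClosed} are all of \emph{perturbative} character: they start from a map that is already given into $B$ (or already known liftable) and correct or perturb it. Weak semiprojectivity, by contrast, demands that an arbitrary $\phi:C(X)\to C/\overline{\bigcup_n J_n}$ be approximately factored through some $C/J_n$, with no candidate map into $C/J_n$ given to iterate from; your proposed Cauchy-type iteration has nothing to seed it. Note also that none of the tools you cite is specific to the commutative setting, so if your argument worked it would prove ``$\ell$-open $\Rightarrow$ semiprojective'' for arbitrary (separable) $C^*$-algebras, which is well beyond what is established here --- a sign that the missing ingredient is substantial rather than routine. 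You acknowledge that some commutative input is needed but do not identify it, and that input is precisely the content of the step.

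The paper fills this hole topologically rather than through weak semiprojectivity: by \cite[Corollary 4.3]{Bla12}, $\ell$-openness of $C(X)$ forces $X$ to be $e$-open and hence locally contractible; the Homotopy Lifting Theorem (Theorem~\ref{thm:HomotopyLifting}) yields the classical homotopy extension theorem for $X$; and Hu's criterion \cite[Theorem IV.1.1/IV.2.4]{Hu65} then shows that a locally contractible compact metric space with the homotopy extension property is an ANR. Only after the ANR property is secured does the dimension bound follow, as in your plan. If you want to salvage your route, you would need an independent argument that $\ell$-openness of $C(X)$ gives weak semiprojectivity (or weak stability of the relevant relations), and no such argument is currently available; replacing that step with the Blackadar--Hu topological argument is the workable path.
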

\begin{proof}
(i)$\Rightarrow$(ii) follows from \cite[Corollary 6.2]{Bla16} and (iii)$\Rightarrow$(i) follows from \cite[Theorem 1.2]{ST12}.

(ii) $\Rightarrow$ (iii) Suppose $C(X)$ is $\ell$-open. Blackadar showed that $X$ is $e$-open and thus locally contractible \cite[Corollary 4.3]{Bla12}.
The Homotopy Lifting Theorem (Theorem~\ref{thm:HomotopyLifting}) implies the homotopy extension theorem for $X$; since $X$ is also locally contractible, we have that $X$ is an ANR by \cite[Theorem IV.2.4]{Hu65}. We conclude that $dim(X) \le 1$ using Proposition 5.3.
\end{proof}


\
\end{document}